\DeclarePairedDelimiter\ceil{\lceil}{\rceil}
\newcommand{\bq}{\begin{equation}}
\newcommand{\eq}{\end{equation}}
\newcommand{\R}{\mathbb{R}}
\newcommand{\Z}{\mathbb{Z}}
\newcommand{\N}{\mathbb{N}}
\newcommand{\abs}[1]{\left\vert#1\right\vert}
\newcommand{\norm}[1]{\left\vert#1\right\vert}
\newcommand{\G}{\mathcal{G}}
\newcommand{\bO}{\mathcal{O}}
\newcommand{\Dt}{\mathcal{D}}
\newcommand{\Sf}{\mathcal{S}}
\newcommand{\MA}{Monge-Amp\`ere\xspace}
\newcommand{\Omb}[0]{\overline{\Omega}}
\newcommand{\x}[0]{\times}
\newcommand{\pOm}[0]{\partial \Omega}
\newcommand{\ra}[0]{\rightarrow}
\newcommand{\maop}[0]{\det(D^2u(x))}
\algnewcommand{\LineComment}[1]{\State \(\triangleright\) #1}
\newtheorem{theorem}{Theorem}
\theoremstyle{lemma}
\newtheorem{lemma}[theorem]{Lemma}
\newtheorem{corollary}[theorem]{Corollary}
\newtheorem{definition}[theorem]{Definition}
\theoremstyle{remark}
\begin{document}

\title[]{Domain Decomposition Methods for the Monge-Amp\`ere equation}

\author[Y. Boubendir]{Yassine Boubendir}
\address{Department of Mathematical Sciences, New Jersey Institute of Technology, University Heights, Newark, NJ 07102}
\email{boubendi@njit.edu}

\author[J. Brusca]{Jake Brusca}
\address{Department of Mathematical Sciences, New Jersey Institute of Technology, University Heights, Newark, NJ 07102}
\email{jb327@njit.edu}

\author[B. Hamfeldt]{Brittany Froese Hamfeldt}
\address{Department of Mathematical Sciences, New Jersey Institute of Technology, University Heights, Newark, NJ 07102}
\email{bdfroese@njit.edu}

\author[T. Takahashi]{Tadanaga Takahashi}
\address{Department of Mathematical Sciences, New Jersey Institute of Technology, University Heights, Newark, NJ 07102}
\email{tt73@njit.edu}

\thanks{The first and fourth authors were partially supported by NSF DMS-1720014 and DMS-2011843.  The second and third authors were partially supported by NSF DMS-1751996. }
\begin{abstract}
We introduce a new overlapping Domain Decomposition Method (DDM) to solve the fully nonlinear Monge-Amp\`ere equation.  While DDMs have been extensively studied for linear problems, their application to fully nonlinear partial differential equations (PDE) remains limited in the literature.  To address this gap, we establish 
a proof of global convergence of these new iterative algorithms using a discrete comparison principle argument. Several numerical tests are performed to validate the convergence theorem. These numerical experiments involve examples of varying regularity. Computational experiments show that method is efficient, robust, and requires relatively few iterations to converge. The results reveal great potential for DDM methods to lead to highly efficient and parallelizable solvers for large-scale problems that are computationally intractable using existing solution methods.
\end{abstract}

\date{\today}    
\maketitle
\section{Introduction}\label{sec:intro}
The \MA equation is a fully nonlinear second-order elliptic partial differential equation given by
    \begin{equation}\label{eq:MA}
        \begin{cases}
        \maop = f(x), & x\in \Omega \\
        u \text{ is convex}.
        \end{cases}
    \end{equation}
This PDE arises in numerous applications including design of optical systems~\cite{RomijnOptics2021,GutierrezAnisotropic2019}, medical image registration~\cite{Angenent_MedicalImaging}, economics~\cite{chiappori2017multi,galichon2016optimal}, meteorology~\cite{cullen2006mathematical}, surface evolution~\cite{OsherSethian}, machine learning~\cite{lin2020gans}, geophysics~\cite{EngquistFroese_Wass}, and optimal transport~\cite{Villani}.
Recent years have seen a great deal of progress in the development and analysis of new discretizations of the \MA equation~\cite{FO_MATheory,feng2021narrow,mirebeau2016minimal,WanMA,ObermanWS,FOFiltered,benamou2016monotone,feng2017convergent,Hamfeldt_gaussian,Nochetto_MAConverge,DeanGlowinski,dutchgroup,FengNeilan,BrennerNeilanMA2D,HL_ThreeDimensions}.
These discretizations reduce the PDE to a large system of nonlinear algebraic equations.

Much less attention has been given to the efficient solution of the resulting nonlinear algebraic systems.  Newton's method is a common choice of solver.  However, in practice Newton's method often scales poorly with problem size, particularly in the presence of non-smooth solutions or a loss of uniform ellipticity~\cite{FO_MANum}. In fact, in three-dimensions existing methods can be prohibitively expensive even on fairly small problems~\cite{HL_ThreeDimensions}.  There is a clear need for the development of efficient, parallelizable solvers for the \MA equation if these numerical methods are to keep pace with the demands of current applications. 

In this work, we describe, analyze, and test an overlapping Domain Decomposition Method (DDM) for the \MA equation.  DDMs were originally introduced and analyzed as iterative methods at the continuous PDE level~\cite{Lions_ddm}.  The study of DDMs for linear PDEs is now a fairly mature field~\cite{quarteroni_domain_1999,EwingDDM,SmithDDM,Gander_Schwarz,boubendir_quasi-optimal_2012,bendali_non-overlapping_2006-1}.  More recently, DDMs have been introduced as nonlinear solvers~\cite{DryjaHackbush,Lui,SpiteriMiellou,TaiEspedal}, linear solvers within a Newton iteration~\cite{CaiDryja}, and preconditioners~\cite{DoleanGander,CaiKeyes,ChaoiquiGander,CaiLi}.  

Despite the great potential in this setting, essentially nothing is known about the use of DDMs for fully nonlinear second order elliptic equations.  A key additional challenge in this setting is that wide finite difference stencils are often required, which prevents the use of standard boundary conditions at the interfaces between subdomains. The iterative algorithm proposed in this paper effectively combines wide stencil approximations for local problems with an overlapping decomposition method.    {The resulting solver is robust and well-suited for industrial problems that involve domains with large size}. 

In this work, we begin the process of developing and analyzing DDMs for solving the \MA equation.  In particular, we  propose, analyze, and test a new overlapping DDM for the Dirichlet problem, which has the flavor of a nonlinear additive Schwarz method.
However, in order to accommodate wide finite difference stencils, we introduce a non-standard interface condition that allows neighboring subdomains to interact along a narrow strip instead of solely at the boundary.

 We exploit a discrete comparison principle in order to prove that this approach, combined with any monotone discretization of the \MA equation, will converge to the desired solution given any initial guess.  We also implement and thoroughly test this method using both smooth and non-classical solutions of the \MA equation.  Numerical experiments validate the convergence proof and indicate that even without optimization, the method is efficient and robust.  These results suggest that DDM methods have the potential to become the highly efficient and parallelizable solver that is needed by current large-scale applications. 
{All code is publicly available at \texttt{https://github.com/tt73/MA-DDM}. }

{{This paper is organized as follows. The first section is devoted to background, including the description of the Monge-Amp\`ere equation and the wide stencil schemes used to approximate this equation. Domain decomposition methods are introduced in section 2, where we explain how to set and adapt overlapping DDM to the Monge Monge-Amp\`ere equation in connection with the wide stencil approach. Following that, we prove the convergence of the algorithm in section 3 using a discrete comparison principle argument.  The validation of these methods is presented in section 4. Section 5 discusses conclusions and future work. }}

\section{{Monge-Amp\`ere equation}}\label{sec:background}
The \MA equation is an example of a second-order degenerate elliptic partial differential equation, which take{s} the general form
\bq\label{eq:elliptic}
F(x,u(x),\nabla u(x), D^2u(x)) = 0, \quad x \in \bar{\Omega}.
\eq

\begin{definition}[Degenerate Elliptic]
Let $\Omega\subset\R^n$ and denote by $\Sf^n$ the set of symmetric $n\x n$ matrices.  The operator $F:\Omb\x\R\x\R^n\x\Sf^n \to \R$ is said to be \emph{degenerate elliptic} if 
\begin{equation*}
    F(x,u,p,X) \leq F(x,v,p,Y)
\end{equation*}
whenever $u \leq v$ and $ X \succeq Y$.
\end{definition}
We note that the operator is defined on the closure of $\Omega$, and takes on the value of the relevant boundary conditions at $\pOm$. For the Dirichlet problem, which is the setting implemented in this article, the PDE operator at the boundary is defined as
\bq\label{eq:dirichletOp}
F(x,u(x),\nabla u(x),D^2u(x)) = u(x) - g(x), \quad x \in \partial\Omega.
\eq

In general, degenerate elliptic equations need not have classical solutions, and some notion of weak solution is required.  {The Aleksandrov solution provides a geometric interpretation in terms of the subgradient measure, which allows for very general right-hand sides, including measures that do not have an associated density~\cite{Gutierrez}. Though slightly less general, the viscosity solution has proved to be particularly useful for this class of equations~\cite{CIL}, and forms the foundation for most of the recently developed numerical convergence proofs for the \MA equation.} The idea of the viscosity solution is to use a maximum principle argument to pass derivatives onto smooth test functions that lie above or below the semi-continuous envelopes of the candidate weak solution.

\begin{definition}[Viscosity Solution]
A bounded upper (lower) semi-continuous function $u$ is a \emph{viscosity subsolution (supersolution)} of~\eqref{eq:elliptic} if for every $\phi \in C^2(\Omb)$, that whenever $u-\phi$ has a local maximum (minimum) at $x\in\Omb$, then
\begin{equation*}
    F_*^{(*)}(x,u(x),\nabla \phi(x),D^2\phi(x)) \leq (\geq) 0.
\end{equation*}A bounded function $u:\Omb \ra \R$ is a \emph{viscosity solution} of~\eqref{eq:elliptic} if $u^*(x)$ is a viscosity subsolution and $u_*(x)$ is a viscosity supersolution.
\end{definition}



\subsection{Approximation of Elliptic Equations}

A fruitful technique for numerically solving fully nonlinear elliptic equations involves finite difference schemes of the form
\begin{equation}\label{eq:scheme}
    F^{h}(x,u(x),u(x) - u(\cdot)) = 0
\end{equation}
defined on a finite set of discretization points $\G\subset\Omega$ with characteristic spacing of the grid points encoded in the parameter $h>0$.
Many key results on the convergence of finite difference methods to the viscosity solution of a degenerate elliptic PDE are based upon a set of criterion developed by Barles and Souganidis~\cite{BSnum}.

\begin{definition}[Consistency]\label{def:consist}
The scheme~\eqref{eq:scheme} is \emph{consistent} with~\eqref{eq:elliptic} if, for any test function $\phi \in C^{2,1}(\Omb)$ and $x \in \Omb$, we have
\begin{align}
    \limsup_{h\ra 0^+,y\ra x, \xi \ra 0}F^h(y,\phi(y)+\xi,\phi(y)-\phi(\cdot)) \leq F^*(x,\phi(x),\nabla\phi(x),D^2\phi(x))
    \\
    \liminf_{h\ra 0^+,y\ra x, \xi \ra 0}F^h(y,\phi(y)+\xi,\phi(y)-\phi(\cdot)) \geq F_*(x,\phi(x),\nabla\phi(x),D^2\phi(x)).
\end{align}
\end{definition}


\begin{definition}[Monotonicity]\label{def:mono}
The scheme~\eqref{eq:scheme} is \emph{monotone} if $F^h$ is a non decreasing function of its last two arguments. 
\end{definition}

\begin{definition}[Stability]\label{def:stab}
The scheme~\eqref{eq:scheme} is \emph{stable} if there exists some $M>0$, independent of $h$, such that every solution $u^h$ satisfies $||u^h||_{\infty} < M$.
\end{definition}

These simple concepts lead immediately to convergence of finite difference methods, provided the underlying PDE satisfies a strong comparison principle.
\begin{definition}[Comparison principle]\label{def:comparison}
The PDE operator $F(x,u,p,X)$ satisfies a strong comparison principle if, whenever $u$ is a viscosity subsolution and $v$ is a viscosity supersolution, $u \leq v$.
\end{definition}

\begin{theorem}[Convergence~\cite{BSnum}]\label{thrm:Conv}
Let $u$ be the unique viscosity solution of the PDE~\eqref{eq:elliptic}, where $F$ is a degenerate elliptic operator with a strong comparison principle.   Let $u^h$ be any solution of~\eqref{eq:scheme} where $F^h$ is a consistent, monotone, stable approximation scheme. Then $u^h$ converges uniformly to $u$ as $h\to0$.
\end{theorem}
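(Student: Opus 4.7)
The proof plan follows the classical Barles--Souganidis half-relaxed limits argument, which in this setting reduces the uniform convergence statement to the comparison principle.

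The plan is to introduce the upper and lower envelopes of the discrete solutions,
\begin{equation*}
\bar{u}(x) = \limsup_{h\to 0^+,\, y\to x} u^h(y), \qquad \underline{u}(x) = \liminf_{h\to 0^+,\, y\to x} u^h(y),
\end{equation*}
defined on $\bar{\Omega}$. Stability (Definition~\ref{def:stab}) immediately guarantees that these half-relaxed limits are well defined, bounded, upper/lower semi-continuous, and satisfy $\underline{u}\leq\bar{u}$ pointwise. The whole strategy is to show the reverse inequality $\bar{u}\leq\underline{u}$ via the strong comparison principle; once this is established, $\bar{u}=\underline{u}=:u$ and standard arguments with semi-continuous envelopes upgrade pointwise equality to uniform convergence of $u^h$ to the unique viscosity solution on $\bar{\Omega}$.

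The main step is therefore to verify that $\bar{u}$ is a viscosity subsolution of~\eqref{eq:elliptic} and $\underline{u}$ is a viscosity supersolution. I would focus on the subsolution case (the supersolution case is symmetric). Let $\phi\in C^2(\bar{\Omega})$ be a test function such that $\bar{u}-\phi$ attains a strict local maximum at $x_0\in\bar{\Omega}$. A standard lemma on half-relaxed limits (which I would invoke rather than reprove) provides sequences $h_k\to 0$ and grid points $x_k\in\G$ with $x_k\to x_0$ such that $u^{h_k}-\phi$ attains a local maximum over $\G$ at $x_k$ and $u^{h_k}(x_k)\to\bar{u}(x_0)$. Setting $\xi_k = u^{h_k}(x_k)-\phi(x_k)$, the maximum property gives
\begin{equation*}
u^{h_k}(x_k) - u^{h_k}(y) \geq \phi(x_k)-\phi(y) \qquad \text{for every } y\in\G.
\end{equation*}
Since $F^{h_k}$ is monotone (non-decreasing in its last argument, Definition~\ref{def:mono}), this inequality together with $u^{h_k}(x_k)=\phi(x_k)+\xi_k$ yields
\begin{equation*}
F^{h_k}\bigl(x_k,\phi(x_k)+\xi_k,\phi(x_k)-\phi(\cdot)\bigr) \leq F^{h_k}\bigl(x_k,u^{h_k}(x_k),u^{h_k}(x_k)-u^{h_k}(\cdot)\bigr) = 0.
\end{equation*}
Passing to the limit along the subsequence, using consistency (Definition~\ref{def:consist}) together with $\xi_k\to\bar{u}(x_0)-\phi(x_0)$, gives the subsolution inequality at $(x_0,\bar{u}(x_0),\nabla\phi(x_0),D^2\phi(x_0))$. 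The supersolution argument for $\underline{u}$ is completely analogous, with a minimizing sequence and the reverse monotonicity inequality. For non-strict maxima, a standard perturbation $\phi+\eta|x-x_0|^2$ argument followed by sending $\eta\to 0$ reduces to the strict case.

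With $\bar{u}$ a subsolution and $\underline{u}$ a supersolution, both bounded and with the appropriate semi-continuity, the hypothesized strong comparison principle (Definition~\ref{def:comparison}) applied to the operator~$F$ (which already encodes the Dirichlet data through~\eqref{eq:dirichletOp}) forces $\bar{u}\leq\underline{u}$ on $\bar{\Omega}$. Combining with the trivial inequality $\underline{u}\leq\bar{u}$ concludes that both half-relaxed limits coincide with the unique viscosity solution $u$, and uniform convergence of $u^h$ to $u$ on $\bar{\Omega}$ follows from a routine compactness argument on the semi-continuous envelopes. The main obstacle I anticipate is a purely technical one: verifying the existence of the localized maximizing sequence $x_k$ in the grid $\G$ when $x_0\in\partial\Omega$, since the wide-stencil and boundary-encoded-in-$F$ framework can make the behavior at $\partial\Omega$ delicate; this is where one must be careful that the chosen subsequence lies in $\bar{\Omega}\cap\G$ and that the boundary operator~\eqref{eq:dirichletOp} is treated on equal footing with the interior scheme in the consistency inequality.
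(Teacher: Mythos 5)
This theorem is quoted directly from Barles--Souganidis (\cite{BSnum}) and the paper supplies no proof of its own; your argument is precisely the standard half-relaxed-limits proof from that reference, and it is essentially correct, including the correct use of monotonicity to replace $u^{h_k}$ by $\phi$ in the stencil and of stability to make the envelopes $\bar{u},\underline{u}$ well defined. The one point to tighten is that the paper's Definition~\ref{def:consist} takes $\xi\to 0$, whereas your $\xi_k\to\bar{u}(x_0)-\phi(x_0)$ need not vanish, so you should first translate the test function by a constant so that $\phi(x_0)=\bar{u}(x_0)$ (which changes neither the maximum point nor $\nabla\phi$, $D^2\phi$) before invoking consistency.
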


A strong comparison principle has never been established for the \MA equation; in fact, there are settings where it is known to fail~\cite{Hamfeldt_gaussian,jensen20187}.  However, alternate techniques have been designed to show convergence of many different monotone schemes to the weak solution of the \MA equation~\cite{feng2017convergent,Hamfeldt_gaussian,Nochetto_MAConverge}.  Key to much of this analysis are the concepts of a continuous and proper scheme.

\begin{definition}[Continuous]\label{def:continuous}
The scheme~\eqref{eq:scheme} is \emph{continuous} if $F^h$ is continuous in its second and third arguments.
\end{definition}



\begin{definition}[Proper]
The scheme~\eqref{eq:scheme} is \emph{proper} if there exists some $C>0$ such that if $u \geq v$ then $F^h(x,u,p)-F^h(x,v,p) \geq C(u-v)$.
\end{definition}

Perhaps surprisingly, monotone and proper schemes satisfy a discrete form of the comparison principle, even if the underlying PDE does not have a comparison principle.  This is tremendously important for establishing the well-posedness of approximation schemes, and will play a critical role in the analysis of DDM in the present article.

\begin{theorem}[Discrete Comparison Principle~{\cite[Theorem~5]{ObermanEP}}]\label{thm:DCP}
Let $F$ be a proper monotone scheme.  Suppose that $F(x,u(x),u(x)-u(\cdot))\leq(<) F(x,v(x),v(x)-v(\cdot))$ for every $x  \in \G$.  Then $u \leq(<) v$ on $\G$. 
\end{theorem}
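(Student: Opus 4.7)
The plan is to argue by contradiction, leveraging the fact that any violation of $u\leq v$ on the finite grid $\G$ must be attained at a maximizing point where monotonicity and properness can be combined to contradict the hypothesis on $F$.

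First, I would suppose $u \not\leq v$ on $\G$. Since $\G$ is a finite set, the function $w = u - v$ attains its maximum value $M > 0$ at some point $x_0 \in \G$. The key observation at this maximizer is that for every $y \in \G$,
\begin{equation*}
u(x_0) - v(x_0) \geq u(y) - v(y), \qquad \text{equivalently,} \qquad u(x_0) - u(y) \geq v(x_0) - v(y).
\end{equation*}
In the notation of the scheme, this says that the vector of increments $u(x_0) - u(\cdot)$ dominates $v(x_0) - v(\cdot)$ componentwise.

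Next, I would exploit this domination via the monotonicity of $F^h$ in its third argument (Definition of Monotonicity) and its value $u(x_0) \geq v(x_0)$ combined with properness in the second argument. Monotonicity gives
\begin{equation*}
F^h\!\bigl(x_0, u(x_0), u(x_0)-u(\cdot)\bigr) \geq F^h\!\bigl(x_0, u(x_0), v(x_0)-v(\cdot)\bigr),
\end{equation*}
while properness, applied with $u(x_0) \geq v(x_0)$, yields
\begin{equation*}
F^h\!\bigl(x_0, u(x_0), v(x_0)-v(\cdot)\bigr) - F^h\!\bigl(x_0, v(x_0), v(x_0)-v(\cdot)\bigr) \geq C\bigl(u(x_0)-v(x_0)\bigr) = CM > 0.
\end{equation*}
Chaining these two inequalities gives $F^h(x_0,u(x_0),u(x_0)-u(\cdot)) > F^h(x_0,v(x_0),v(x_0)-v(\cdot))$, directly contradicting the hypothesis $F^h(x_0,u(x_0),u(x_0)-u(\cdot)) \leq F^h(x_0,v(x_0),v(x_0)-v(\cdot))$. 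Hence $u \leq v$ everywhere on $\G$.

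For the strict version, I would start from the weak conclusion $u \leq v$ already established and suppose toward contradiction that $u(x_1) = v(x_1)$ at some $x_1 \in \G$. Since $u \leq v$ globally, the difference $w = u - v$ attains its maximum (namely $0$) at $x_1$, so once again $u(x_1) - u(\cdot) \geq v(x_1) - v(\cdot)$ pointwise. Monotonicity then gives $F^h(x_1,u(x_1),u(x_1)-u(\cdot)) \geq F^h(x_1,v(x_1),v(x_1)-v(\cdot))$ (properness contributes nothing here since $u(x_1)=v(x_1)$), which contradicts the strict hypothesis at $x_1$. The main subtlety, and the only nontrivial step, is recognizing that the ordering of increments at the maximum of $u-v$ is exactly what is needed to activate monotonicity in the correct direction; everything else is a mechanical combination of the two defining properties of $F^h$.
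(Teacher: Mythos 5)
Your argument is correct: the paper does not prove this statement itself but cites it from Oberman (Theorem~5 of the referenced work), and your proof is precisely the standard argument given there --- evaluate at a maximizer of $u-v$ on the finite grid, use monotonicity in the third argument to pass from the increments of $u$ to those of $v$, and use properness to extract the strict gap $C\,M>0$. The handling of the strict case via the already-established weak inequality is also sound, so there is nothing to correct.
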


\begin{theorem}[Existence and uniqueness~{\cite[Theorem~8]{ObermanEP}}]\label{thm:exist}
Let $F$ be a continuous, proper, monotone scheme. Then $F(x,u(x),u(x)-u(\cdot)) = 0$ has a unique solution.
\end{theorem}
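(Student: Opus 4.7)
Uniqueness follows immediately from the discrete comparison principle (Theorem~\ref{thm:DCP}): if $u$ and $v$ are both solutions, then $F(\cdot,u,u-u(\cdot)) = 0 = F(\cdot,v,v-v(\cdot))$, so Theorem~\ref{thm:DCP} applied with each inequality in turn gives $u \equiv v$ on $\G$.

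For existence, the plan is to imitate Perron's method at the discrete level. First, use properness to construct constant sub- and supersolutions: setting $K = \|F(\cdot,0,0)\|_\infty / C$, the bound $F(x,K,0) \geq F(x,0,0) + CK \geq 0$ shows that the constant $K$ is a supersolution, and a symmetric computation shows $-K$ is a subsolution. Let $\mathcal{S}$ denote the set of all subsolutions. Then $\mathcal{S}$ is nonempty (it contains $-K$) and each $v \in \mathcal{S}$ is uniformly bounded above by $K$ via Theorem~\ref{thm:DCP}. Set $u^\star(x) = \sup_{v \in \mathcal{S}} v(x)$.

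Two claims then close the argument. The first is that $u^\star \in \mathcal{S}$. A short observation shows $\mathcal{S}$ is closed under pointwise maxima: if $z = \max(v,w)$ and $z(x) = v(x)$ at some $x$, then $z(y) \geq v(y)$ for every $y$ forces $z(x) - z(y) \leq v(x) - v(y)$ componentwise, and monotonicity in the difference argument yields $F(x,z(x),z(x)-z(\cdot)) \leq F(x,v(x),v(x)-v(\cdot)) \leq 0$. Because $\G$ is finite, $u^\star$ can be realized as the pointwise limit of an increasing sequence of finite maxima drawn from $\mathcal{S}$, and continuity of $F$ in its last two arguments passes the inequality $F \leq 0$ to the limit.

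The second claim is that $u^\star$ is also a supersolution. Arguing by contradiction, suppose $F(x_0, u^\star(x_0), u^\star(x_0) - u^\star(\cdot)) < 0$ at some $x_0 \in \G$, and consider the perturbation $v_\varepsilon = u^\star + \varepsilon\,\mathds{1}_{\{x_0\}}$ for small $\varepsilon > 0$. At $x = x_0$ the second and third arguments of $F$ perturb continuously, so its value remains strictly negative; at any $x \neq x_0$ only the single difference $v_\varepsilon(x) - v_\varepsilon(x_0)$ changes, decreasing by $\varepsilon$, and monotonicity then implies $F$ can only decrease, preserving the subsolution inequality. Thus $v_\varepsilon \in \mathcal{S}$ with $v_\varepsilon(x_0) > u^\star(x_0)$, contradicting the definition of $u^\star$. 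I expect this final bump-up step to be the most delicate part of the argument, since it combines a strict inequality inherited from continuity at $x_0$ with the monotonicity bounds at all other grid points, without any quantitative Lipschitz control on $F$.
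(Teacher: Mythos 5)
Your argument is correct, but it is worth noting that the paper does not prove this statement at all: it is quoted verbatim from Oberman (Theorem~8 of the cited reference), where existence is obtained by showing that the Euler map $u \mapsto u - \rho F[u]$ is a strict contraction in the sup norm for small $\rho$, using properness to get the contraction factor and (locally Lipschitz) continuity to control the map. Your route is a discrete Perron construction instead: constant barriers $\pm K$ from properness, closure of the subsolution set under pointwise maxima via monotonicity in the difference arguments, passage to the supremum using finiteness of $\G$ and continuity, and the single-point bump $u^\star + \varepsilon\,\mathds{1}_{\{x_0\}}$ to show the supremum is also a supersolution. Every step checks out; in particular the bump step is sound because at $x_0$ only continuity is needed to preserve the strict inequality for small $\varepsilon$, while at $x \neq x_0$ the only argument of $F$ that changes is the difference $v_\varepsilon(x)-v_\varepsilon(x_0)$, which decreases, so monotonicity preserves $F \leq 0$ with no quantitative control required. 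The two approaches buy different things: the contraction argument is constructive (it yields a convergent iteration, which Oberman exploits algorithmically) but as usually stated needs Lipschitz continuity of the scheme, whereas your Perron argument is non-constructive but works under the bare continuity hypothesis actually stated in this theorem, so it is arguably the more faithful proof of the statement as written. Two small points you should make explicit if you write this up: $u^\star > -\infty$ because $-K \in \mathcal{S}$, and the final conclusion $F[u^\star]=0$ follows by combining the subsolution inequality from Claim~1 with the supersolution inequality from Claim~2.
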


\subsection{Wide stencil schemes}
The past several years have seen great interest in the design of monotone approximation schemes for the \MA equation~\cite{benamou2014monotone,Bonnet_OTBC,feng2017convergent,FroeseMeshfreeEigs,FO_MATheory,mirebeau2015MA,Nochetto_MAConverge,ObermanEigenvalues}.  A common feature of these methods is that they rely on wide stencils instead of more traditional nearest neighbors finite difference schemes.  In fact, wide stencils are a necessary feature of a consistent, monotone scheme for a general degenerate elliptic equation~\cite{Kocan,MotzkinWasow}.

Monotone discretization of the \MA equation typically involves a reformulation involving a nonlinear combination of various linear elliptic operators.  For example,  the \MA operator can be represented by the product of the eigenvalues of the Hessian matrix.  In two-dimensions, these eigenvalues are given by the largest and smallest second directional derivatives~\cite{ObermanEigenvalues}:
\bq
\det(D^2u) = \left(\min\limits_{\nu\in\R^2}\frac{\partial^2u}{\partial\nu^2}\right)\left(\max\limits_{\nu\in\R^2}\frac{\partial^2u}{\partial\nu^2}\right).
\eq
Discretization involves approximating the min/max using a finite collection of directions  $\nu$.  For example, on a Cartesian grid, the min/max may be computed using all grid-aligned directions $\nu = (m,n)\in\Z^2$ that have a maximal stencil width $  w \in \N$. 
See Figure \ref{fig:stencil}. The second directional derivatives can then be discretized using centered differences:

\bq\label{eq:centered}
\frac{\partial^2u}{\partial\nu^2}(x) \approx \frac{u(x+h\nu) + u(x-h\nu)-2u(x)}{h^2\abs{\nu}^2}.
\eq

In order to preserve both consistency and monotonicity at points near the boundary of the domain, where the wide stencil would extend outside the domain, a typical approach is to over-resolve the domain boundary (Figure~\ref{fig:circle}).  This allows one to maintain the same angular resolution $d\theta$, though the centered difference~\eqref{eq:centered} needs to be replaced with a lower-order uncentered finite difference scheme.

\begin{figure}
    \centering
    \begin{subfigure}[]{0.5\textwidth}\includegraphics[width=\textwidth]{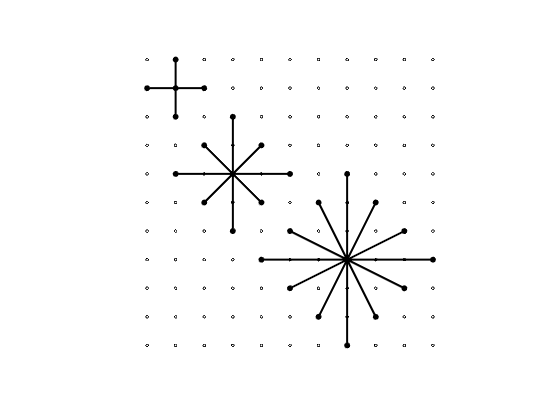}\caption{A wide finite difference stencil.}\label{fig:stencil}\end{subfigure}
    \begin{subfigure}[]{0.4\textwidth}\includegraphics[width=\textwidth]{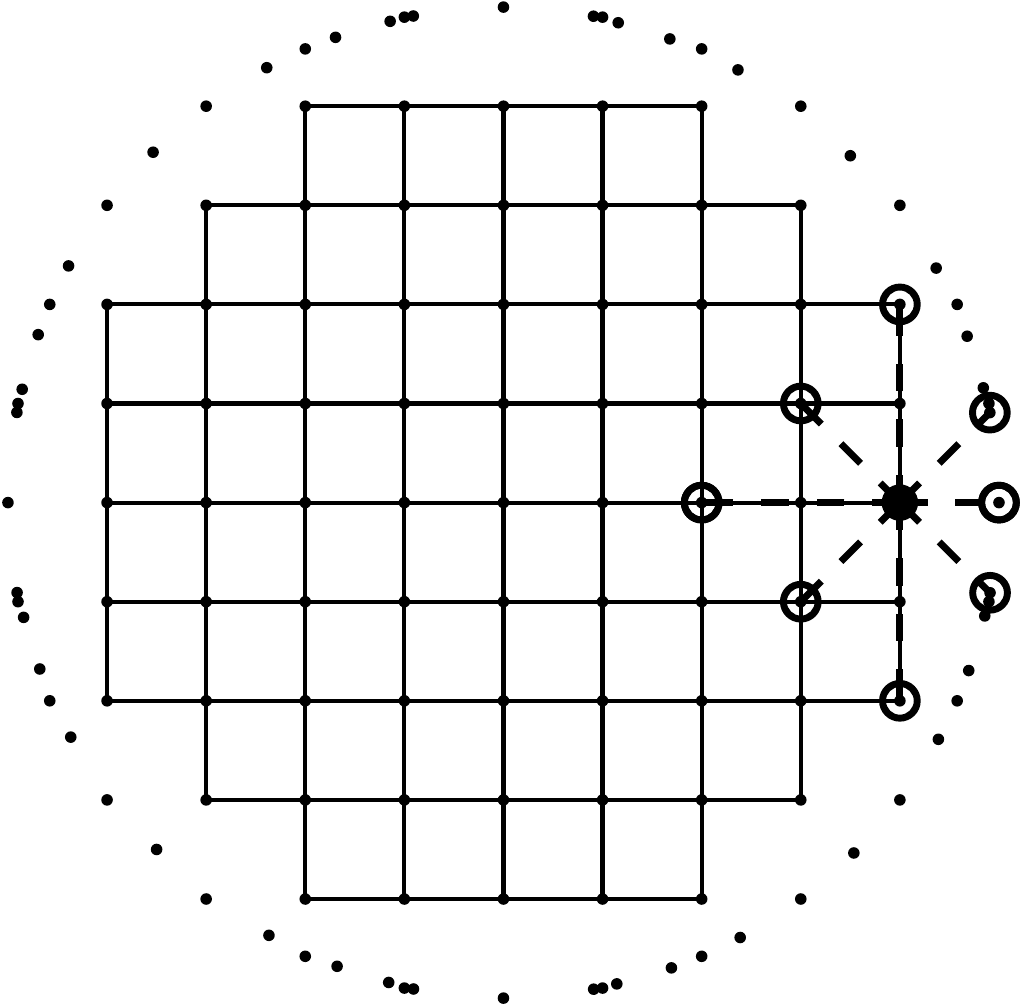}\caption{A discretization of a disc that preserves angular resolution up to the boundary.}\label{fig:circle}\end{subfigure}
    \caption{}
    \label{fig:ws}
\end{figure}

\section{Domain Decomposition}\label{sec:ddm}
In this section we introduce domain decomposition methods for nonlinear equations and show how they can be used as an iterative solver for the nonlinear discrete system.

\subsection{Motivating Example}
In order to approximate the solution to $F[u]=0$ we need to solve the nonlinear system of equations given by $F^h[u] = 0 $. In general, there are no direct methods to solve this; instead we must rely on an iterative method such as Newton's Method. Given some approximate solution $u^0$, we iterate 
\begin{align*}
    u^{n+1} = G(u^n,F^h[u^n],\nabla F^h[u^n])
\end{align*}
where $G$ encodes some root finding method which may depend on the Jacobian of the scheme $\nabla F^h$. In general, the update step can be expensive to form, and even more so to evaluate. {{In this paper, we use a new domain decomposition approach to design}} an improved iterative method. 

The methods discussed in this paper are focused on domain decomposition methods at the discrete level, but for simplicity we begin with an example at the continuous level.

We consider the Dirichlet problem for the \MA equation
\bq
    F(x,u) = \begin{cases}
         - \det(D^2u(x)) + f(x) = 0, &x\in \Omega,
        \\
        u(x) - g(x) = 0, &x \in \pOm\\
        u \text { is convex}
    \end{cases}
\eq
on a square domain $\Omega = (0,L)^2$ for some $L > 0$. We then decompose the the domain $\Omega$ into two overlapping rectangles 
\begin{align}
    \Omega_1 = (0,l_1)\times (0,L)
    \\
    \Omega_2 = (l_2,L) \times (0,L)
\end{align}
where $0 < l_2 < l_1 < L$. 

Our goal is to solve the PDE on each of these subdomains independently of each other, then combine the results into a global solution. However, in order to make each subdomain problem well-posed, we need to provide some additional boundary data on the portion of $\partial\Omega_i$ that lies within the interior of the global domain $\Omega$. 
We will refer to the sides where we need to impose boundary data as having artificial boundary data. In order to close the subdomain problems for computing the newest iteration, we impose artificial Dirichlet data using the value of the global solution approximation at the previous iterate.

The subdomain problems on $\Omega_1$ and $\Omega_2$ now become
\bq\label{eq:subdomain1cts}
    \begin{cases}
    -\det(D^2u_i^{n+1}(x)) + f(x) = 0, &x\in \Omega_i
    \\
    u_i^{n+1}(x)-g(x) = 0, &x \in \partial\Omega_i \cap \pOm
    \\
    u_i^{n+1}(x)-u^n(x) = 0, &x \in \{l_i\}\times (0,L)\\
    u_i^{n+1} \text{ is convex, } & x\in\Omega_i.
    \end{cases}
\eq
We denote the solutions of the subdomain problems at the $nth$ iterate as $u_1^n$ and $u_2^n$. Presently, each $u_i$ is only defined on $\Omega_i$. However, our goal is to combine these solutions into a new global approximation. For this reason, it will be more convenient to recast the problem so that the solutions take values on all of $\Omb.$ 

\bq\label{eq:subdomain2cts}
    F_i(x,u_i^n,u^n) = \begin{cases}
        -\det(D^2u_i^{n+1}(x)) + f(x) = 0, &x\in \Omega_i
        \\
        u_i^{n+1}(x)-g(x) = 0, &x \in \partial\Omega
        \\
        u_i^{n+1}(x)-u^n(x) = 0, &x \not\in \Omega_i \cup \pOm\\
        u_i^{n+1}(x) \text{ is convex}, & x \in \Omega_i.
    \end{cases}
\eq
These solutions are equivalent to the original subdomain problem~\eqref{eq:subdomain1cts}, but are extended to take on the values of $u^n$ outside the active subdomain.

To combine the subdomain solutions into an improved global approximation, we want to assign values of $u_i(x)$ when $x \in \Omega_i$. However, we note that in the overlap region $\Omega_1\cap\Omega_2$, the subdomain solutions $u_1$ and $u_2$ need not be equal.  Our approach is to take some average of these in the overlap region.  The resulting global approximation is given by
\bq\label{eq:globalcts}
u^{n+1}(x) = \begin{cases}
u_1^{n+1}(x), & x \in [0,l_2] \times[0,L]\\
\dfrac{u_1^{n+1}(x)+u_2^{n+1}(x)}{2}, & x \in (l_2,l_1) \times[0,L]\\
u_2^{n+1}(x), & x \in [l_1,L] \times[0,L].
\end{cases}
\eq




\subsection{Discrete DDM}
In the motivating example, $\Omega$ had a very simple geometry that was easily decomposed into two subdomains. In order to {{design}} a versatile method, we need to allow for more general domains and multiple subdomains. In addition, we need to apply the procedure at the discrete level, which poses additional challenges when using wide stencil schemes.


We begin with a discretization $\G\subset\bar{\Omega}$ of the domain with a grid resolution $h>0$.  We assume also a consistent, monotone, proper, continuous finite difference scheme of the form
\bq\label{eq:schemetosolve}
F^h(x,u(x),u(x)-u(\cdot)) = 0, \quad x \in \G.
\eq
We assume that this scheme incorporates the Dirichlet boundary data so that
\bq\label{eq:schemebc}
F^h(x,u(x),u(x)-u(\cdot)) = u(x)-g(x), \quad x \in \G\cap\partial\Omega.
\eq
Our goal now is to design a DDM that will converge to the unique solution $u^*$ of this discrete system of equations.

We begin by selecting any overlapping decomposition of the domain $\Omega$ into open sets
 $\Omega_1,...,\Omega_d$ such that
\[
    \Omega = \bigcup\limits_{i=1}^{N_d} \Omega_i
\]
and $N_d \in \N$ represents the number of distinct subdomains. 
This allows us to define a set of discrete subdomains $\G_1,...,\G_{N_d}$ given by 
\[
    \G_i = \G \cap \Omega_i.
\]

As in the continuous example, we first need to define subdomain problems.  A traditional overlapping DDM would impose artificial boundary data $v$ at grid points lying along the artificial boundary, leading to a subdomain problem of the form
\bq\label{eq:sub1}
\begin{cases}
F^h(x,u_i(x),u_i(x)-u_i(\cdot)) = 0, & x \in \G_i\\
u_i(x) - g(x) = 0, & x \in \G\cap\partial\Omega\\
u_i(x) - v(x) = 0, & x \in \G\cap\Omega\cap\partial\Omega_i.
\end{cases}
\eq
However, even with a careful choice of subdomains that ensures that the artificial boundaries are properly resolved by the grid $\G$, this will not lead to a well-defined problem.  There is an additional challenge caused by the wide finite difference stencils that are typically required for the \MA equation.  A consequence is that at points near the boundary of the subdomain $\Omega_i$, the finite difference stencils will draw on values lying beyond the boundary $\partial\Omega_i$.  See Figure~\ref{fig:subdomains}.

\begin{figure}
    \centering
		\includegraphics[width=0.8\textwidth]{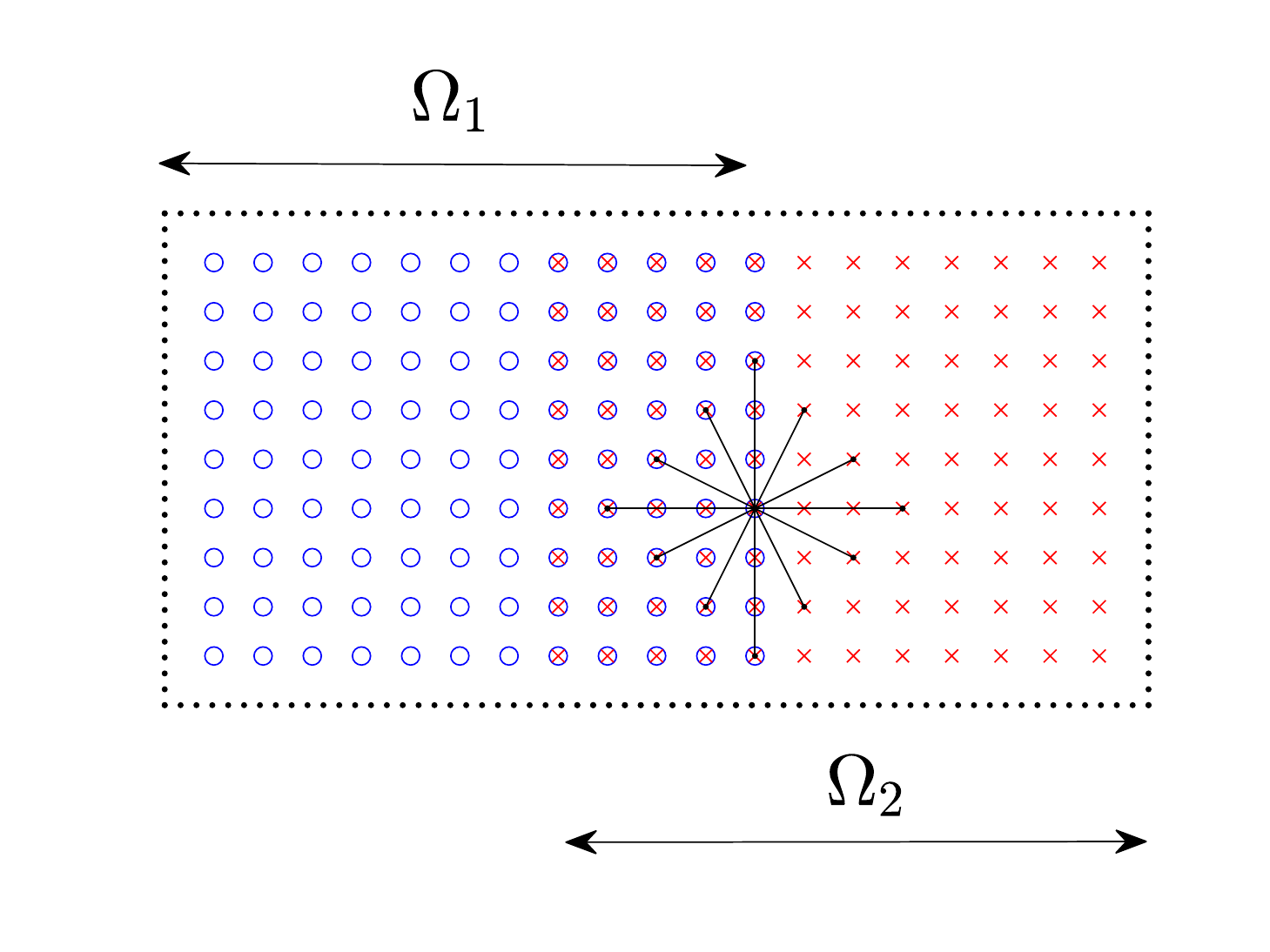}
    \caption{A two-subdomain decomposition of the domain $\Omega$. Points in $\G_1$ are denoted by $\circ$, points in $\G_2$ by $\times$, and points in $\G\cap\partial\Omega$ by $\cdot$.  The boundary is over-resolved to preserve both consistency and monotonicity of a wide stencil scheme.  Near the boundary of $\Omega_i$, the wide stencil may extend significantly into $\Omega_j$ ($j \neq i$), which requires the condition $u_i^{n+1}(x) = u^n(x)$ to be enforced along a strip of grid points in practice.}
    \label{fig:subdomains}
\end{figure}

In order to accommodate stencils of width $w$, it is necessary to provide data for $u_i^{n+1}$ at all grid points within a strip of width $wh$ neighboring the boundary $\Omega\cap\partial\Omega_i$.  This is equivalent to posing an appropriate subdomain problem within the entire computational domain $\G$ (as in~\eqref{eq:subdomain2cts} in the continuous example).  To this end, we define the following subdomain operators for $i = 1, \ldots, N_d$:
\bq\label{eq:sub2}
F^h_i(x,u_i;v) \equiv \begin{cases}
F^h(x,u_i(x),u_i(x)-u_i(\cdot)), & x \in \G_i\\
u_i(x) - g(x), & x \in \G\cap\partial\Omega\\
u_i(x) - v(x), & x \in \G\cap\Omega\cap\G_i^c.
\end{cases}
\eq

We observe that this subdomain operator inherits many of the properties of the original discrete approximation $F^h$; in particular, it is consistent, monotone, proper, and continuous.  This allows us to uniquely define a solution operator $S^h_i[v]$ (Theorem~\ref{thm:exist}) such that
\bq\label{eq:solnop}
F_i^h(x,S_i^h[v];v) = 0, \quad x \in \G
\eq
for any grid function $v$ defined on $\G$.


We can now define a DDM iteration.  Suppose that we are given an approximation $u^n:\G\to\R$.  We begin by defining $u_1^{n+1}, \ldots, u_{N_d}^{n+1}$ as the solutions to the $N_d$ subdomain problems:
\bq\label{eq:solvesub}
u_i^{n+1} = S_i^h[u^n], \quad i = 1, \ldots, N_d.
\eq
We recall again that this involves restricting the finite difference approximation~\eqref{eq:schemetosolve}  to the $ith$ subdomain, with any missing data supplied by the results of the previous iteration.

Once we have computed the subdomain solutions $u^{n+1}_1,...,u^{n+1}_{N_d}$, we need to combine them to create a global update $u^{n+1}$.  As in the continuous example, we wish to utilize the solution $u_i^{n+1}$ in the $ith$ subdomain.  However, once again we must account for the fact that some points $x\in\G$ will lie in multiple subdomains.  In these instances, we utilize a weighted average of all possible subdomain solutions. To accomplish this, we define a set of weights $\lambda_1(x),...,\lambda_{N_d}(x)$ on $\G$ with the following properties:
\bq\label{eq:lambs}
    \begin{cases}
        0 \leq \lambda_i(x) \leq 1 , &x \in \G, \,\, i = 1,...,N_d
        \\
        \lambda_i(x) > 0, &x \in \G_i, \,\, i = 1,...,N_d
        \\
        \lambda_i(x) = 0, &x \not \in \G_i, \,\, i = 1,...,N_d
        \\
        \sum\limits_{i = 1}^{N_d} \lambda_i(x) = 1, & x \in \G.
    \end{cases}
\eq

Now we define the update $u^{n+1}$ as the weighted average 
\bq\label{eq:average}
    u^{n+1}(x) { =} \sum\limits_{i=1}^{N_d} \lambda_i(x) u_i^n(x).
\eq

For convenience, we can combine this with the subdomain solution operators into a single operator
\begin{align}\label{eq:Gdef}
    G[u] \equiv \sum\limits_{i=1}^{N_d} \lambda_i S^h_i[u].
\end{align}
Then we can write the DDM iteration as
\bq\label{eq:update} 
    u^{n+1} = G[u^n] = G^{n+1}[u_0].
\eq

\section{Convergence}\label{sec:conv}
The goal of this section is to prove the convergence of the DDM iteration defined in the previous section.  In particular, we will establish the following result on global convergence.

\begin{theorem}[Global convergence]\label{thm:mainconvergence}
Let $\G\subset\R^n$ be a finite set of points, $F^h$ be any continuous, monotone, proper approximation scheme, and $u^*$ be the unique solution of
\[ F^h(x,u(x),u(x)-u(\cdot)) = 0, \quad x \in \G.\]
Then the DDM iteration $u^{n+1} = G[u^n]$ defined by~\eqref{eq:sub2}-\eqref{eq:Gdef} converges to $u^*$ for any choice of $u^0:\G\to\R$.
\end{theorem}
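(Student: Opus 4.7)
The strategy is to apply the discrete comparison principle (Theorem~\ref{thm:DCP}) twice: at the subdomain level to establish that $G$ is order-preserving, and then at the global level to show that the monotone limits of the iteration coincide with $u^*$.

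The first step is to verify that each $F_i^h$ in~\eqref{eq:sub2} inherits continuity, monotonicity, and properness from $F^h$, so that $S_i^h[v]$ is well defined (Theorem~\ref{thm:exist}) and $v \mapsto S_i^h[v]$ is monotone (by applying Theorem~\ref{thm:DCP} to $F_i^h(\cdot\,;v_1)$ versus $F_i^h(\cdot\,;v_2)$, noting that only the artificial-interface term $u_i - v$ depends on $v$). Averaging against the nonnegative weights in~\eqref{eq:lambs} then makes $G$ monotone and continuous, and $u^*$ is a fixed point of $G$ because by uniqueness $S_i^h[u^*] = u^*$ for each $i$. For the bracketing, I would use the properness constant $K>0$: for any $C>0$, a direct computation gives $F_i^h(\cdot, u^*+C;u^*+C) \ge 0$ on $\G$ (with value at least $KC$ on the subdomain interior, value $C$ on $\partial\Omega$, and value $0$ on the artificial interface), so Theorem~\ref{thm:DCP} yields $S_i^h[u^*+C] \le u^*+C$ and hence $G[u^*+C] \le u^*+C$. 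Monotonicity then forces $v_n := G^n[u^*+C]$ to be a decreasing sequence pinched between $u^*$ and $u^*+C$, and by continuity its pointwise limit $v^\star \in [u^*, u^*+C]$ is a fixed point of $G$. The symmetric construction produces $w_n := G^n[u^*-C] \uparrow w^\star \in [u^*-C, u^*]$.

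The crux of the proof, and the main obstacle, is showing $v^\star = u^*$ (with $w^\star = u^*$ by symmetry). I would argue by contradiction: suppose $M := \max_\G (v^\star - u^*) > 0$. Since $S_i^h[\cdot] = g$ on $\G \cap \partial\Omega$, we have $v^\star = g$ there, and the maximum must be attained at an interior grid point $x_0 \in \G_i$ for some $i$. Setting $w_j^\star := S_j^h[v^\star]$, the identity $v^\star = \sum_j \lambda_j w_j^\star$ combined with the subdomain constraint $w_j^\star \equiv v^\star$ off $\G_j$ collapses at $x_0$ to
\[
\sum_{j \in I(x_0)} \lambda_j(x_0)\bigl(v^\star(x_0) - w_j^\star(x_0)\bigr) = 0, \qquad I(x_0) := \{j : x_0 \in \G_j\}.
\]
Monotonicity of $S_j^h$ applied to $v^\star \le u^* + M$ combined with the constant-shift bound $S_j^h[u^*+M] \le u^*+M$ gives $w_j^\star \le u^* + M$ pointwise, so every term in the sum is nonnegative; since $\lambda_j(x_0) > 0$ for $j \in I(x_0)$ by~\eqref{eq:lambs}, each must vanish and in particular $w_i^\star(x_0) = u^*(x_0) + M$. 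Because $w_i^\star \le u^* + M$ globally with equality at $x_0$, one has $w_i^\star(x_0) - w_i^\star(\cdot) \ge u^*(x_0) - u^*(\cdot)$ pointwise, and monotonicity of $F^h$ in its third argument followed by properness yields
\[
0 = F^h\bigl(x_0, w_i^\star(x_0), w_i^\star(x_0)-w_i^\star(\cdot)\bigr) \ge F^h\bigl(x_0, u^*(x_0), u^*(x_0)-u^*(\cdot)\bigr) + KM > 0,
\]
the desired contradiction. Finally, given any initial guess $u^0$, choosing $C$ large enough that $u^*-C \le u^0 \le u^*+C$ and invoking monotonicity of $G$ produces the sandwich $w_n \le u^n \le v_n$ for all $n$, so $u^n \to u^*$.
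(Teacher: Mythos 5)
Your proof is correct, and its overall architecture---monotonicity of $G$ via the subdomain comparison principle, $u^*$ as a fixed point, bracketing the initial guess, and squeezing between two monotone sequences---matches the paper's. Where you genuinely diverge is in the technical heart of the argument: identifying the monotone limits with $u^*$. The paper proves that \emph{any} fixed point of $G$ solves the scheme (Lemma~\ref{lem:fptosoln}) by assuming a nonzero residual, then minimizing $u_i(x)-u_n(x)$ over all \emph{pairs} of subdomain solutions and all grid points, and ruling out three cases for the location of the minimizer. You instead compare the fixed point directly to $u^*$: the a priori bound $S^h_j[v^\star]\le u^*+M$ (from monotonicity of $S^h_j$ together with the constant-shift estimate $S^h_j[u^*+M]\le u^*+M$) forces every active subdomain solution to attain the value $u^*(x_0)+M$ at the maximizer $x_0$, and a single application of monotonicity in the differences followed by properness yields $0\ge KM>0$. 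This is shorter and avoids the paper's case analysis, at the price of invoking $u^*$ explicitly rather than characterizing fixed points intrinsically; both versions do deliver uniqueness of the fixed point. Your bracketing by $u^*\pm C$ in place of the paper's sub/supersolutions $u^0\mp K$ built from properness is an equivalent, slightly more direct choice. One shared soft spot: both you and the paper pass to the limit of the monotone sequence and declare it a fixed point, which tacitly uses continuity of $v\mapsto S^h_i[v]$; this is true (the solution operators are nonexpansive in the sup norm, by comparing $S^h_i[w]$ against $S^h_i[v]\pm\|v-w\|_\infty$ via Theorem~\ref{thm:DCP}), but neither argument spells it out.
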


\subsection{Monotonicity of DDM iteration}



We begin by establishing that the DDM iteration is itself a monotone mapping.  We first recall that the subdomain operators $F_i^h$~\eqref{eq:sub2} are trivially continuous, monotone, and proper since the underlying scheme $F^h$ has these properties.  This allows us to restate the Discrete Comparison 
Principle (Theorem \ref{thm:DCP}) in terms of the subdomain solution operators.


\begin{lemma}[Discrete comparison principle]\label{lem:subDCP}
Under the hypotheses of Theorem~\ref{thm:mainconvergence}, suppose that $u \leq v$ on $\G$.  Then
 $S^h_i[u] \leq S^h_i[v]$ on $\G$ for every $i  =1, \ldots, d$.
\end{lemma}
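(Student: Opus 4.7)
The plan is to reduce the inequality to a direct application of the discrete comparison principle (Theorem~\ref{thm:DCP}) applied to the subdomain scheme $F_i^h(\cdot,\cdot\,;v)$ with $v$ viewed as a fixed parameter. First I would observe that the subdomain operator inherits all of the structural properties needed: on $\G_i$ it coincides with $F^h$, while on $\G\cap\partial\Omega$ and on $\G\cap\Omega\cap\G_i^c$ it takes the form $w(x)-g(x)$ or $w(x)-v(x)$, which are trivially continuous, monotone (they do not depend on $w$ at other grid points and are increasing in $w(x)$), and proper with constant $C=1$. Hence for each fixed $v$, the map
\[
\widetilde{F}^h(x,w(x),w(x)-w(\cdot)) := F_i^h(x,w\,;v)
\]
is a continuous, monotone, proper scheme on $\G$, so Theorem~\ref{thm:DCP} applies.

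Next, set $w_1 = S_i^h[u]$ and $w_2 = S_i^h[v]$, so by definition of the solution operator~\eqref{eq:solnop},
\[
F_i^h(x,w_1\,;u) = 0 \quad \text{and} \quad F_i^h(x,w_2\,;v) = 0, \qquad x\in\G.
\]
The key step is to compare $F_i^h(x,w_1\,;v)$ with $F_i^h(x,w_2\,;v)$ case by case using the three-way definition~\eqref{eq:sub2}. On $\G_i$ the operator $F_i^h$ does not depend on the artificial data, so $F_i^h(x,w_1\,;v)=F^h(x,w_1(x),w_1(x)-w_1(\cdot))=0$. On $\G\cap\partial\Omega$ we also get $F_i^h(x,w_1\,;v)=w_1(x)-g(x)=0$. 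Finally, on $\G\cap\Omega\cap\G_i^c$, we use the hypothesis $u\leq v$ to write
\[
F_i^h(x,w_1\,;v) = w_1(x)-v(x) = u(x)-v(x) \leq 0,
\]
since $w_1(x)=u(x)$ on this set by the definition of $S_i^h[u]$.

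Combining the three cases gives $F_i^h(x,w_1\,;v)\leq 0 = F_i^h(x,w_2\,;v)$ for all $x\in\G$. Applying Theorem~\ref{thm:DCP} to the scheme $\widetilde{F}^h$ yields $w_1\leq w_2$, which is the desired conclusion $S_i^h[u]\leq S_i^h[v]$. I do not anticipate any serious obstacle: the only mildly delicate point is bookkeeping the three regimes in the definition of $F_i^h$ and checking that the boundary/overlap contributions of $F_i^h$ satisfy the hypotheses of Theorem~\ref{thm:DCP}, which follow from the simple form $w(x)-(\cdot)$ of those terms.
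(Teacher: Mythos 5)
Your proof is correct and follows exactly the route the paper intends: the paper states this lemma without an explicit proof, asserting only that $F_i^h$ inherits continuity, monotonicity, and properness so that Theorem~\ref{thm:DCP} applies, and your case analysis (in particular the observation that $S_i^h[u](x)=u(x)$ on $\G\cap\Omega\cap\G_i^c$, so $F_i^h(x,S_i^h[u];v)=u(x)-v(x)\leq 0$ there) is precisely the missing bookkeeping needed to invoke that theorem with $v$ frozen as a parameter. No gaps.
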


An immediate consequence of the subdomain map being a proper, monotone scheme is that the DDM update operator preserves inequalities on $\G$.
\begin{corollary}[Monotonicity of DDM map]\label{cor:Gmono}
Under the hypotheses of Theorem~\ref{thm:mainconvergence}, suppose that $u \leq v$ on $\G$. Then $G[u] \leq G[v]$ on $\G$.
\end{corollary}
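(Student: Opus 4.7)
The plan is to derive the corollary directly from the subdomain discrete comparison principle (Lemma~\ref{lem:subDCP}) combined with the defining properties~\eqref{eq:lambs} of the weights $\lambda_i$. The proof should be essentially immediate, since $G$ is by construction a convex combination of the subdomain solution operators.

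First I would fix $u,v:\G\to\R$ with $u \leq v$ pointwise on $\G$ and apply Lemma~\ref{lem:subDCP} to each index $i = 1, \ldots, N_d$ to obtain $S_i^h[u](x) \leq S_i^h[v](x)$ for every $x \in \G$. Next, using the fact that the weights satisfy $\lambda_i(x) \geq 0$ on $\G$ (the first condition in~\eqref{eq:lambs}), multiplying the subdomain inequalities preserves the ordering, giving $\lambda_i(x)\, S_i^h[u](x) \leq \lambda_i(x)\, S_i^h[v](x)$ pointwise. Summing over $i = 1, \ldots, N_d$ and invoking the definition~\eqref{eq:Gdef} of $G$ yields $G[u](x) \leq G[v](x)$ for every $x \in \G$.

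There is no real obstacle here: the substantive content has already been absorbed into Lemma~\ref{lem:subDCP}, which in turn rests on Theorem~\ref{thm:DCP} applied to each subdomain operator $F_i^h$ (these inherit continuity, monotonicity, and properness from $F^h$, as noted in the paragraph preceding the lemma). The only point worth verifying carefully in the write-up is that the partition-of-unity conditions in~\eqref{eq:lambs} are only used through $\lambda_i \geq 0$; neither the support condition nor the summation-to-one property is needed for monotonicity, which is a useful observation to flag since it shows that the corollary is robust to the specific choice of averaging weights.
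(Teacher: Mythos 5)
Your proposal is correct and follows essentially the same route as the paper: apply Lemma~\ref{lem:subDCP} to each subdomain solution operator and then use the nonnegativity of the weights $\lambda_i$ to pass the inequality through the convex combination defining $G$. Your added remark that only $\lambda_i \geq 0$ (and not the support or partition-of-unity conditions) is needed for monotonicity is a fine observation but does not alter the argument.
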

\begin{proof}
From Lemma~\ref{lem:subDCP}, we have that 
\begin{align*}
   S^h_i[u] \leq S^h_i[v]
\end{align*}
for each $i = 1,\ldots,N_d$. 

Since the DDM update operator is given by a convex combination of these solutions operators, we have that
\begin{align*}
    G[u] = \sum\limits_{i=1}^{N_d} \lambda_i S^h_i[u] \leq \sum\limits_{i=1}^{N_d} \lambda_i S^h_i[v] = G[v].
\end{align*}
\end{proof}

\subsection{Fixed point of the DDM iteration}
In this section, we will establish that the DDM iteration has a unique fixed point, which corresponds to the unique solution $u^*$ of the underlying approximation scheme~\eqref{eq:schemetosolve}.

We begin by showing that the solution $u^*$ is a fixed point of the DDM mapping.
\begin{lemma}[Existence of fixed point]\label{lem:solnfp}
 Under the hypotheses of Theorem~\ref{thm:mainconvergence}, let $u^*$ be the unique solution to 
 \[ F^h(x,u(x),u(x)-u(\cdot)) = 0, \quad x \in \G.\]
 Then $G[u^*] = u^*$.
\end{lemma}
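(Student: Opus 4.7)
The plan is to show directly that $u^*$ solves each subdomain problem when the artificial data is taken to be $u^*$ itself, so that by uniqueness of the subdomain solution operators we get $S_i^h[u^*] = u^*$ for every $i$, and then use that the weights $\lambda_i$ form a partition of unity.

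First I would unpack the piecewise definition of the subdomain operator $F_i^h(x, u; v)$ in~\eqref{eq:sub2} with $u = v = u^*$ and check each case. For $x \in \G_i$, $F_i^h(x, u^*; u^*) = F^h(x, u^*(x), u^*(x) - u^*(\cdot)) = 0$ because $u^*$ solves the global scheme on all of $\G$. For $x \in \G \cap \partial \Omega$, $F_i^h(x, u^*; u^*) = u^*(x) - g(x) = 0$ since the global scheme $F^h$ already encodes the Dirichlet data via~\eqref{eq:schemebc} and $u^*$ satisfies it. For $x \in \G \cap \Omega \cap \G_i^c$, the ``artificial'' equation $u^*(x) - v(x) = u^*(x) - u^*(x) = 0$ holds trivially. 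Thus $u^*$ is a solution of the $i$th subdomain system $F_i^h(x, w; u^*) = 0$ on $\G$.

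Next I would invoke existence and uniqueness for the subdomain system. Since $F^h$ is continuous, monotone, and proper, the subdomain operator $F_i^h(\,\cdot\,;v)$ inherits these properties for any fixed artificial data $v$, as was observed just before Lemma~\ref{lem:subDCP}. Theorem~\ref{thm:exist} then gives a unique solution, which is precisely the definition of $S_i^h[v]$ via~\eqref{eq:solnop}. Combining this with the previous step applied to $v = u^*$ yields
\[
S_i^h[u^*] = u^*, \quad i = 1, \ldots, N_d.
\]

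Finally I would plug this into the definition~\eqref{eq:Gdef} of $G$ and use the partition of unity property from~\eqref{eq:lambs}:
\[
G[u^*](x) = \sum_{i=1}^{N_d} \lambda_i(x) S_i^h[u^*](x) = \sum_{i=1}^{N_d} \lambda_i(x) u^*(x) = u^*(x), \quad x \in \G.
\]
There is no real obstacle here; the only thing to be careful about is the case split in the subdomain operator and the fact that $u^*$ automatically satisfies the ``artificial boundary'' clause when the artificial data is taken to be $u^*$. The proof is essentially bookkeeping.
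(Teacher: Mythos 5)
Your proposal is correct and follows essentially the same route as the paper's proof: verify case by case that $F_i^h(x,u^*;u^*)=0$ on all of $\G$, invoke uniqueness of the subdomain solution to get $S_i^h[u^*]=u^*$, and then sum against the partition of unity. No meaningful differences.
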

\begin{proof}
We first need to establish that for any $i = 1, \ldots, N_d$, $u^*$ is a fixed point of the solution operator $S_i^h[u]$.  To this end, we consider the $ith$ subdomain operator
$ F_i^h(x,u^*;u^*).$

If $x \in \G_i$, we have
\[  F^h_i(x,u^*;u^*) = F^h(x,u^*(x),u^*(x)-u^*(\cdot)) = 0.\]
Similarly, if $x \in \G\cap\partial\Omega$ we have
\[  F^h_i(x,u^*;u^*) = u^*(x)-g(x) =F^h(x,u^*(x),u^*(x)-u^*(\cdot)) = 0. \]
Finally, if $x \in \G\cap\Omega\cap\G_i^c$ we have
\[ F^h_i(x,u^*;u^*) = u^*(x)-u^*(x) = 0.\]
Since the solution operator is uniquely defined by the equation
\[ F_i^h(x,S_i^h[u^*];u^*) = 0, \quad x \in \G,\]
this establishes that
\[ S_i^h[u^*] = u^*.\]

Since the DDM operator is expressed as a convex combination of these solution operators, we can immediately compute
\begin{align*}
    G[u^*] &= \sum\limits_{i=1}^{N_d} \lambda_i S_i^h[u^*]
    \\
    & = \sum\limits_{i=1}^{N_d} \lambda_i u^*
    \\
    & = u^*.
\end{align*}
\end{proof}

Next, we need to show that any fixed point of the DDM mapping is a solution to the scheme~\eqref{eq:schemetosolve}.  We begin by stating a couple preliminary lemmas regarding the properties of fixed points.

\begin{lemma}[Behavior of fixed points on boundary]\label{lem:fixedptbdy}
Under the hypotheses of Theorem~\ref{thm:mainconvergence}, let $u$ be any fixed point of the DDM map $G[u]$.  Then $u(x)=g(x)$ for every $x \in \G\cap\partial\Omega$.
\end{lemma}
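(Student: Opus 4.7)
The plan is to reduce the claim to the boundary behavior of each individual subdomain solution operator $S_i^h$, and then use the fact that the DDM weights form a partition of unity. Concretely, I would fix an arbitrary point $x \in \G\cap\partial\Omega$ and unpack the definition of the subdomain operator~\eqref{eq:sub2} at such boundary points.

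First, observe that for every index $i = 1, \ldots, N_d$ and every boundary point $x \in \G\cap\partial\Omega$, the subdomain operator reduces by construction to the Dirichlet piece
\[
F_i^h(x, w; v) = w(x) - g(x),
\]
independently of the artificial data $v$. Since by definition $S_i^h[u]$ is the unique function satisfying $F_i^h(x, S_i^h[u]; u) = 0$ for all $x \in \G$ (Theorem~\ref{thm:exist} applied to $F_i^h$), evaluating this equation at the boundary point $x$ immediately gives $S_i^h[u](x) = g(x)$ for each $i$.

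Next, I would insert this into the definition of the DDM update and use the partition of unity property~\eqref{eq:lambs}. Namely,
\[
G[u](x) = \sum_{i=1}^{N_d} \lambda_i(x)\, S_i^h[u](x) = \sum_{i=1}^{N_d} \lambda_i(x)\, g(x) = g(x),
\]
where the last equality uses $\sum_i \lambda_i(x) = 1$. Finally, invoking the hypothesis that $u$ is a fixed point of $G$ yields $u(x) = G[u](x) = g(x)$, completing the argument.

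I do not foresee a real obstacle here: the statement is essentially a direct consequence of how Dirichlet data are baked into every subdomain operator together with the convex-combination structure of $G$. The only point worth flagging is that the argument relies on the weights $\lambda_i(x)$ summing to one at every grid point, including those on $\partial\Omega$, which is guaranteed by~\eqref{eq:lambs}; no information about the interior PDE operator $F^h$ or about monotonicity/properness is needed for this particular lemma.
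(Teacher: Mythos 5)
Your proposal is correct and follows essentially the same route as the paper: read off $S_i^h[u](x)=g(x)$ at boundary points directly from the definition of the subdomain operator~\eqref{eq:sub2}, then use the partition-of-unity property of the weights together with the fixed-point identity $u = G[u]$. The paper's proof is just a more compressed version of the same argument.
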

\begin{proof}
Note that each subdomain problem~\eqref{eq:sub2} enforces $S^h_i[u](x) = g(x)$ for any $x \in \G\cap\partial\Omega$ and $i = 1, \ldots, N_d$.  Then at these boundary points the fixed point satisfies
\[ u(x) = \sum\limits_{i=1}^{N_d}\lambda_i(x)S^h_i[u](x) = g(x).\]
\end{proof}

\begin{lemma}[Solution operators for fixed points]\label{lem:solnopfixed}
Under the hypotheses of Theorem~\ref{thm:mainconvergence}, let $u$ be any fixed point of the DDM map $G[u]$.  Then $S_i^h[u](x) = u(x)$ for every $x \in \G\cap\G_i^c$ and $i = 1, \ldots, d$.
\end{lemma}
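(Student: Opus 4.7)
The plan is to read off the conclusion directly from the definition of the subdomain operator in~\eqref{eq:sub2}, combined with the boundary information provided by Lemma~\ref{lem:fixedptbdy}. Nothing deep is happening here: the point is simply that outside of $\G_i$, the subdomain operator $F_i^h(\cdot\,; v)$ is the \emph{identity} equation $u_i(x) - v(x) = 0$ (or the Dirichlet equation at $\partial\Omega$), so its unique solution $S_i^h[v]$ is forced to agree with the argument $v$ off of $\G_i$.

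More precisely, I would split $\G \cap \G_i^c$ into the two pieces in which the definition~\eqref{eq:sub2} behaves differently, namely
\[
 \G\cap\G_i^c \;=\; \bigl(\G\cap\Omega\cap\G_i^c\bigr) \;\cup\; \bigl(\G\cap\partial\Omega\bigr),
\]
using that the subdomains $\Omega_i$ are open subsets of $\Omega$, so that no point of $\G\cap\partial\Omega$ lies in $\G_i$. For $x$ in the interior piece $\G\cap\Omega\cap\G_i^c$, the definition of $F_i^h$ gives
\[
 0 \;=\; F_i^h\bigl(x, S_i^h[u];\, u\bigr) \;=\; S_i^h[u](x) - u(x),
\]
since $S_i^h[u]$ is by construction the unique function annihilating $F_i^h(\cdot\,;u)$ on all of $\G$ (Theorem~\ref{thm:exist}). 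This yields $S_i^h[u](x)=u(x)$ immediately.

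For the remaining case $x \in \G\cap\partial\Omega$, the same definition forces $S_i^h[u](x)=g(x)$, while Lemma~\ref{lem:fixedptbdy} (which we have already invoked at this stage) tells us that $u(x)=g(x)$ since $u$ is a fixed point of $G$. Thus $S_i^h[u](x) = g(x) = u(x)$ on the boundary as well, completing the proof.

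I do not expect any real obstacle: the lemma is essentially a bookkeeping statement about how the subdomain problem is set up, and the only nontrivial ingredient, namely that a fixed point matches the Dirichlet data on $\G\cap\partial\Omega$, has been isolated in the preceding Lemma~\ref{lem:fixedptbdy}. The subtlety to be careful about is simply that the decomposition $\{\Omega_i\}$ consists of open sets, so that the boundary points $\G\cap\partial\Omega$ are genuinely contained in every $\G_i^c$ and the case split above is exhaustive.
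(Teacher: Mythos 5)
Your proof is correct and follows essentially the same route as the paper's: split $\G\cap\G_i^c$ into the boundary points (where Lemma~\ref{lem:fixedptbdy} gives $u=g$ and the subdomain operator forces $S_i^h[u]=g$) and the interior points of $\G_i^c$ (where the definition~\eqref{eq:sub2} directly enforces $S_i^h[u](x)=u(x)$). Your version is just slightly more explicit about why the boundary case closes, which is fine.
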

\begin{proof}
These case of $x\in\partial\Omega$ is encompassed in Lemma~\ref{lem:fixedptbdy}.  Otherwise, for $x \in \G\cap\Omega\cap\G_i^c$, the subdomain problem~\eqref{eq:sub2} trivially enforces $S_i^h[u](x) = u(x)$.
\end{proof}

Now we can establish that the fixed point does solve the desired scheme~\eqref{eq:schemetosolve}.
\begin{lemma}[Fixed point is a solution]\label{lem:fptosoln}
Under the hypotheses of Theorem~\ref{thm:mainconvergence}, let $u$ be any fixed point of the DDM map $G[u]$.  Then\[F^h(x,u(x),u(x)-u(\cdot)) = 0, \quad x \in \G.\]
\end{lemma}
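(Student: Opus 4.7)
The plan is to introduce the pointwise envelopes $\bar{u}(x) = \max_i u_i(x)$ and $\underline{u}(x) = \min_i u_i(x)$, where $u_i := S_i^h[u]$, and to show that $\bar{u}$ is a discrete subsolution and $\underline{u}$ a discrete supersolution of the scheme~\eqref{eq:schemetosolve}. Applying the Discrete Comparison Principle (Theorem~\ref{thm:DCP}) to this pair then yields $\bar{u} \leq \underline{u}$; combined with the trivial inequality $\underline{u} \leq \bar{u}$, we obtain $\bar{u} = \underline{u}$ on $\G$. This forces every $u_i$ to coincide pointwise, and since the fixed-point relation $u = \sum_i \lambda_i u_i$ is a convex combination of this common value, it yields $u = u_i = S_i^h[u]$ for each $i$. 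For any $x \in \G \cap \Omega$ we may then select any $i$ with $x \in \G_i$ to conclude $F^h(x,u,u-u(\cdot)) = F^h(x,u_i,u_i-u_i(\cdot)) = 0$, while $x \in \G \cap \partial\Omega$ is handled by Lemma~\ref{lem:fixedptbdy} together with~\eqref{eq:schemebc}.

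To prove the subsolution bound for $\bar{u}$, fix $x \in \G \cap \Omega$. I will select an index $i^*$ achieving $u_{i^*}(x) = \bar{u}(x)$ and also satisfying $x \in \G_{i^*}$. The existence of such an $i^*$ is where the fixed-point structure is genuinely used: if every maximizer instead satisfied $x \notin \G_i$, then Lemma~\ref{lem:solnopfixed} would give $u_i(x) = u(x)$ for each such $i$ and hence $\bar{u}(x) = u(x)$. The chain $u(x) = \sum_j \lambda_j(x) u_j(x) \leq \bar{u}(x) = u(x)$ would then hold as an equality, forcing $u_j(x) = u(x) = \bar{u}(x)$ for every $j$ with $\lambda_j(x) > 0$. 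Any such $j$ satisfies $x \in \G_j$ by~\eqref{eq:lambs}, producing a maximizer inside an active subdomain and contradicting our hypothesis. With this $i^*$ in hand, $F^h(x,u_{i^*}(x),u_{i^*}(x)-u_{i^*}(\cdot)) = 0$; since $\bar{u}(x) = u_{i^*}(x)$ while $\bar{u}(y) \geq u_{i^*}(y)$ for every $y \in \G$, the discrete differences satisfy $\bar{u}(x) - \bar{u}(y) \leq u_{i^*}(x) - u_{i^*}(y)$ pointwise, and monotonicity of $F^h$ in its third argument yields
\[ F^h(x,\bar{u}(x),\bar{u}(x)-\bar{u}(\cdot)) \leq F^h(x,u_{i^*}(x),u_{i^*}(x)-u_{i^*}(\cdot)) = 0. \]
On $\G \cap \partial\Omega$, the subdomain problems~\eqref{eq:sub2} enforce $u_i(x) = g(x)$ for every $i$, so $\bar{u}(x) = g(x)$ and the inequality $F^h(x,\bar{u}(x),\bar{u}(x)-\bar{u}(\cdot)) = 0$ follows from~\eqref{eq:schemebc}.

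The supersolution inequality for $\underline{u}$ is obtained by the symmetric argument, replacing max with min and reversing the inequalities throughout. The main obstacle is precisely the existence of the active-subdomain maximizer $i^*$ described above: this is the one step that cannot be handled purely through the subdomain comparison principle and genuinely requires combining Lemma~\ref{lem:solnopfixed} with the convex-combination structure of the DDM update~\eqref{eq:average}. Once both envelope inequalities are established, the remainder of the proof is a direct application of Theorem~\ref{thm:DCP} as outlined above.
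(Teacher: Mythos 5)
Your proposal is correct, and it takes a genuinely different route from the paper. The paper argues by contradiction: assuming $F^h$ fails to vanish at some point, it produces indices $j,k$ with $S^h_k[u](x)>u(x)>S^h_j[u](x)$, then minimizes $u_i(z)-u_n(z)$ over all pairs of subdomain solutions and all grid points, and derives a contradiction through a three-case analysis on the location of the minimizer (using Lemma~\ref{lem:solnopfixed}, properness and monotonicity of $F^h$, and the convex-combination structure of the weights). You instead build the envelopes $\bar{u}=\max_i S^h_i[u]$ and $\underline{u}=\min_i S^h_i[u]$, show they are a discrete sub- and supersolution of the global scheme, and apply the Discrete Comparison Principle once to force $\bar{u}=\underline{u}$, hence $S_i^h[u]=u$ for all $i$. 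The two arguments use the same essential ingredients, and the one genuinely delicate step is the same in both: ruling out the degenerate configuration where the extremum is attained only outside active subdomains, which you handle by combining Lemma~\ref{lem:solnopfixed} with the positivity of the weights in~\eqref{eq:lambs} (the analogue of the paper's Case 3). Your Perron-style envelope argument is arguably more transparent and is the standard device in nonlinear Schwarz analyses; it yields the stronger intermediate conclusion $S_i^h[u]=u$ for every $i$ directly, whereas the paper's minimization argument avoids introducing the auxiliary envelope functions at the cost of a longer case analysis. One small point worth stating explicitly in a final write-up: for $x\in\G\cap\Omega$ the existence of some $j$ with $\lambda_j(x)>0$ (and hence $x\in\G_j$) follows from $\sum_i\lambda_i(x)=1$ together with the vanishing of $\lambda_i$ off $\G_i$; you use this implicitly both in the Case~B contradiction and in the final step where you select $i$ with $x\in\G_i$.
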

\begin{proof}
Suppose there exists some $y\in\G$ such that 
\[ F^h(y,u(y),u(y)-u(\cdot)) \neq 0.\]
Since the fixed point satisfies the given Dirichlet data (Lemma~\ref{lem:fixedptbdy}), this must occur at an interior point $y \in\Omega$.  This, in turn, means that $y\in\G_j$ belongs to one of the overlapping subdomains for some $j = 1, \ldots, N_d$.  Then we notice that
\[ F^h_j(y,u;u) = F^h(y,u(y),u(y)-u(\cdot)) \neq 0.\]
Thus $u$ is not a fixed point of this particular subdomain problem: $u \neq S_j^h[u]$.

In particular, there exists some $x \in\G$ such that $u(x) \neq S^h_j[u](x)$.  Without loss of generality, we may suppose that $u(x)>S^h_j[u](x)$.  From 
Lemma~\ref{lem:solnopfixed}, $x \in \G_j$.  

We recall that the fixed point can be expressed as a weighted average of these solution operators,
\[ u(x) = G[u](x) = \sum\limits_{i=1}^d\lambda_i(x) S^h_i[u](x).\]
Since $\lambda_j(x)>0$, there must exist another index $k = 1, \ldots, N_d$ such that $\lambda_k(x)>0$ and 
\bq\label{eq:ordering}S^h_k[u](x)>u(x)>S^h_j[u](x).\eq

Now we introduce the notation
\[u_i(x) = S^h_i[u](x), \quad i = 1, \ldots, N_d\] 
and consider the discrete minimization problem
\bq\label{eq:min}
u_m(z)-u_l(z) = \min\limits_{i,n=1, \ldots, N_d}\min\limits_{x \in \G}\{u_i(x)-u_n(x)\}.
\eq

From~\eqref{eq:ordering}, we note that this minimum must be negative:
\bq\label{eq:neg}
u_m(z)-u_l(z)<0.
\eq
We now consider several different possibilities for the location of $z\in\G$.

{\bf Case 1: $z \in \G_m^c\cap\G_l^c$.} By Lemma~\ref{lem:solnopfixed}, the subdomain solution operators satisfy $u_m(z) = u(z) = u_l(z)$, a contradiction.

{\bf Case 2: $z \in \G_m \cap \G_l$.} Since $u_m$ and $u_l$ solve their respective subdomain problems, we find that
\begin{align*}
    F^h(z,u_l(z),u_l(z)-u_l(\cdot)) &= F^h_l(z,u_l;u_l) = 0\\
    F^h(z,u_m(z),u_m(z)-u_m(\cdot)) &= F^h_m(z,u_m;u_m) = 0.
\end{align*}

From~\eqref{eq:min}-\eqref{eq:neg}, we have that
\bq\label{eq:moninupts}\begin{split}
u_m(z) &< u_l(z)\\
u_m(z)-u_m(x) &\leq u_l(z)-u_l(x), \quad x \in \G.
\end{split}\eq
Since $F^h$ is monotone and proper, this implies that
\[ F^h(z,u_m(z),u_m(z)-u_m(\cdot)) < F^h(z,u_l(z),u_l(z)-u_l(\cdot)),\]
which contradicts the fact that both of these operators vanish.

{\bf Case 3: $z \in \G_l \cap \G_m^c$ ($z \in \G_l^c \cap \G_m$).} We consider the first of these possibilities; the proof of the other setting is analogous.

By Lemma~\ref{lem:solnopfixed} and~\eqref{eq:moninupts}, we know that
\[ u_l(z) > u_m(z) = S^h_m[u](z) = u(z).  \]
This setting also requires $\lambda_l(z)>0$.  Given that the fixed point satisfies 
\[ u(z) = G[u](z) = \sum\limits_{i=1}^{N_d}\lambda_i(z)u_i(z),\]
there must be some other index $p \in \{1, \ldots, N_d\}$ such that $\lambda_p(z)>0$ and $u_p(z)<u(z) = u_m(z)$.

From this information, we observe that
\[ u_p(z)-u_l(z) < u_m(z)-u_l(z),\]
which contradicts the fact that $u_m(z)-u_l(z)$ is the minimum value in~\eqref{eq:min}.

We conclude that actually
\[ F^h(y,u(y),u(y)-u(\cdot)) = 0\]
for every $y \in \G$.
\end{proof}

Lemmas~\ref{lem:solnfp} and~\ref{lem:fptosoln} immediately yield the existence of a unique fixed point, which coincides with the solution of the scheme~\eqref{eq:schemetosolve}.

\begin{theorem}[Fixed point of DDM]\label{thm:fixedpt}
Under the hypotheses of Theorem~\ref{thm:mainconvergence}, the DDM mapping $G[u] = u$ has a unique fixed point $u^*$, which is given by the unique solution of
\[ F^h(x,u(x),u(x)-u(\cdot)) = 0, \quad x \in \G.\]
\end{theorem}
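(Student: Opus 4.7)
The plan is essentially to bundle together the two lemmas immediately preceding the theorem statement together with the existence/uniqueness result (Theorem~\ref{thm:exist}) for continuous, proper, monotone schemes. There should be no substantive new work; the lemmas were precisely designed so that the fixed-point characterization falls out in a few lines.

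First I would invoke Theorem~\ref{thm:exist} on the original scheme $F^h$, which is assumed continuous, proper, and monotone. This produces a unique solution $u^*:\G\to\R$ of $F^h(x,u(x),u(x)-u(\cdot)) = 0$. This $u^*$ is the candidate fixed point and is used both to show existence and to identify the fixed point.

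Next, existence of a fixed point is immediate from Lemma~\ref{lem:solnfp}, which states $G[u^*] = u^*$. So $u^*$ is a fixed point of the DDM iteration.

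Finally, for uniqueness, I would argue by contradiction or directly: let $u$ be any fixed point of $G$. By Lemma~\ref{lem:fptosoln}, $u$ satisfies $F^h(x,u(x),u(x)-u(\cdot)) = 0$ for every $x \in \G$. But Theorem~\ref{thm:exist} guarantees that this discrete system has a unique solution, namely $u^*$. Hence $u = u^*$, which establishes both that the fixed point is unique and that it coincides with the unique solution of the scheme.

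The main (and only) obstacle here is notational rather than mathematical: one must be careful that Theorem~\ref{thm:exist} applies to the \emph{global} scheme $F^h$ (not to a subdomain operator $F_i^h$), and that Lemma~\ref{lem:fptosoln} has already done the nontrivial work of showing that a fixed point of $G$ must satisfy the global scheme at every grid point (including boundary points, via Lemma~\ref{lem:fixedptbdy}, and interior points in every subdomain). Once those two lemmas are in hand, the proof of Theorem~\ref{thm:fixedpt} is essentially a one-line citation.
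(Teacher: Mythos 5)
Your proposal matches the paper exactly: the paper states that Lemmas~\ref{lem:solnfp} and~\ref{lem:fptosoln} immediately yield the result, with existence of $u^*$ from Theorem~\ref{thm:exist} and uniqueness following because any fixed point must solve the scheme. No gaps; this is the same argument.
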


\subsection{Convergence}


In this section, we turn our attention to the proof of the main convergence result (Theorem~\ref{thm:mainconvergence}).  Before showing global convergence, we establish that the DDM iteration converges if initialized with a sub- or supersolution of the scheme~\eqref{eq:schemetosolve}.

\begin{definition}[Sub(super) Solution]\label{def:subsuper}
A function $u:\G\to\R$ is a sub(super)solution of the scheme $F^h$ if 
\[ F^h(x,u(x),u(x)-u(\cdot)) \leq(\geq)0, \quad x \in \G.\]
\end{definition}

\begin{theorem}[Convergence from a sub(super)solution]\label{thrm:subsupconv} 
Under the hypothesis of Theorem~\ref{thm:mainconvergence}, let $u^*$ be the unique solution of the scheme
\[ F^h(x,u(x),u(x)-u(\cdot)) = 0, \quad x \in \G.\]
If $u^0$ is a sub(super)solution of $F^h$, then the DDM iteration
 $u^{n+1} = G[u^n]$ defined in~\eqref{eq:Gdef} converges to $u^*$ as $n\to\infty$. 
\end{theorem}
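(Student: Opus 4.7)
The plan is to prove the subsolution case; the supersolution case follows by the symmetric argument. The overall strategy is a classical monotone iteration on the finite set $\G$: I will show that when $u^0$ is a subsolution, the sequence $u^n = G^n[u^0]$ is monotone increasing and bounded above by $u^*$, hence it converges pointwise to some $\bar u$, and then pass to the limit using continuity of $G$ to identify $\bar u$ with the unique fixed point $u^*$ from Theorem~\ref{thm:fixedpt}.

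The first key step is to show $u^0 \le G[u^0]$ and $u^0 \le u^*$. For $u^0 \le u^*$, I compare $F^h(\cdot,u^0,u^0(\cdot)-u^0)\le 0 = F^h(\cdot,u^*,u^*(\cdot)-u^*)$ and invoke the discrete comparison principle (Theorem~\ref{thm:DCP}). For $u^0 \le G[u^0]$, I fix $i$ and evaluate the subdomain operator at its own argument: on $\G_i$ one has $F_i^h(x,u^0;u^0) = F^h(x,u^0(x),u^0(x)-u^0(\cdot)) \le 0$ by the subsolution hypothesis; on $\G\cap\partial\Omega$ the value is $u^0(x)-g(x)$, which is $\le 0$ because the discrete scheme~\eqref{eq:schemebc} incorporates the Dirichlet data and $u^0$ is a subsolution there too; on $\G\cap\Omega\cap\G_i^c$ the value is identically zero. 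Thus $F_i^h(\cdot,u^0;u^0) \le 0 = F_i^h(\cdot,S_i^h[u^0];u^0)$, and applying Lemma~\ref{lem:subDCP} (or rather the comparison principle for the subdomain operator, whose monotonicity and properness are inherited from $F^h$) yields $u^0 \le S_i^h[u^0]$ for each $i$. Averaging with the weights $\lambda_i$ gives $u^0 \le G[u^0]$. Monotonicity of $G$ (Corollary~\ref{cor:Gmono}) and a simple induction then propagate both inequalities: $u^n \le u^{n+1} \le u^*$ for every $n$.

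Since $\G$ is finite, the pointwise monotone bounded sequence $\{u^n(x)\}$ converges to some $\bar u(x) \le u^*(x)$ for every $x\in\G$. To conclude $\bar u = u^*$, I need continuity of $G$, which reduces to continuity of each solution operator $S_i^h$ in its argument $v$. This follows from the continuity and properness of $F_i^h$ together with the discrete comparison principle: sandwiching $v$ between $v-\epsilon$ and $v+\epsilon$ bounds $S_i^h[v^n]$ uniformly, any subsequential limit $w$ satisfies $F_i^h(\cdot,w;v) = 0$ by continuity, and uniqueness (Theorem~\ref{thm:exist}) forces $w = S_i^h[v]$. Passing to the limit in $u^{n+1} = G[u^n]$ then produces $\bar u = G[\bar u]$, and Theorem~\ref{thm:fixedpt} identifies the unique fixed point as $u^*$. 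The supersolution case is dual: the baseline $u^0 \ge G[u^0]$, $u^0 \ge u^*$ is obtained by the reverse inequalities in the same computation, producing a decreasing sequence bounded below by $u^*$ that again converges to the unique fixed point. The only delicate point I anticipate is the continuity of $S_i^h$, which deserves an explicit justification but does not require new ideas beyond the properness/monotonicity framework already in play.
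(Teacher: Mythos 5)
Your proposal is correct and follows essentially the same route as the paper: the same base case $u^0 \le S_i^h[u^0]$ via the subdomain comparison principle, the same comparison $u^0 \le u^*$, and the same induction using monotonicity of $G$ to obtain a monotone bounded sequence converging to the unique fixed point. The one place you go beyond the paper is in explicitly justifying the continuity of $S_i^h$ (hence of $G$) so that the pointwise limit is indeed a fixed point --- a step the paper's proof leaves implicit --- and your sandwiching-plus-uniqueness argument for that continuity is sound.
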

\begin{proof}
We suppose without loss of generality that $u^0$ is a subsolution; the other case is analagous.  We begin by establishing that
\bq\label{eq:sandwich}
G^n[u^0] \leq G^{n+1}[u^0] \leq u^*
\eq
on $\G$ for every $n \geq 0.$

Note that the induction step follows readily from the monotonicity of the DDM mapping (Corollary~\ref{cor:Gmono}).  In particular, if~\eqref{eq:sandwich} holds then
\[ G^{n+1}[u^0] \leq G^{n+2}[u^0] \leq G[u^*]. \]
Since $u^*$ is a fixed point of the iteration (Theorem~\ref{thm:fixedpt}), the induction step follows.

To demonstrate the base case ($n=0$), we consider the $ith$ subdomain problem for any $i = 1, \ldots, N_d$.  Since $u^0$ is a subsolution of $F^h$, whenever $x\in\G_i$ or $x \in \G\cap\partial\Omega$ we have
\[ F_i^h(x,u^0;u^0) = F^h(x,u^0(x),u^0(x)-u^0(\cdot)) \leq 0.\]
On the other hand, if $x\in\G\cap\Omega\cap\G_i^c$, we have
\[ F_i^h(x,u^0;u^0) = u^0(x)-u^0(x) = 0.\]
Taking this all together, we find that
\[ F_i^h(x,u^0;u^0) \leq 0, \quad x \in \G. \]

We recall also that the $ith$ solution operator satisfies
\[ F_i^h(x,S_i^h[u^0];u^0) = 0, \quad x \in \G.\]
By the Discrete Comparison Principle (Lemma~\ref{lem:subDCP}), we conclude that
\[  u^0 \leq S_i^h[u^0]\]
on $\G$ for any $i = 1, \ldots, N_d$.

Now applying one iteration of the DDM mapping, we find that
\bq\label{eq:mon} G[u^0] = \sum\limits_{i=1}^{N_d}\lambda_iS^h_i[u^0] \geq u^0.\eq

Moreover, since $u^0$ is a subsolution of $F^h$, we can apply the Discrete Comparison Principle (Theorem~\ref{thm:DCP}) to the inequality
\[ F^h(x,u^0(x),u^0(x)-u^0(\cdot)) \leq 0 = F^h(x,u^*(x),u^*(x)-u^*(\cdot)), \quad x \in \G\]
to conclude that $u^0 \leq u^*$ on $\G$.  By the monotonicity of the DDM mapping (Corollary~\ref{cor:Gmono}), we also find that
\bq\label{eq:bound} G[u^0] \leq G[u^*] = u^*. \eq

The inequalities~\eqref{eq:mon}-\eqref{eq:bound} complete the base case
\[ u^0 \leq G[u^0] \leq u^*\]
and we conclude that~\eqref{eq:sandwich} is true.

From here, we can conclude that the DDM iteration is bounded and monotonically non-decreasing, which implies convergence to a fixed point.  From Theorem~\ref{thm:fixedpt}, the only fixed point is $u^*$.
\end{proof}

We can now leverage result to prove global convergence of the DDM iteration.

\begin{proof}[Proof of Theorem~\ref{thm:mainconvergence}]
Given any $u^0:\G\to\R$, we first notice that we can bound it from below and above by a sub- and supersolution respectively.

Let $u^- = u^0 - K$ for some constant $K>0$.  Since $F^h$ is proper, there is a constant $C>0$ such that
\begin{align*}  &F^h(x,u^0(x),u^0(x)-u^0(\cdot)) - F^h(x,u^-(x),u^-(x)-u^-(\cdot))\\ &=   F^h(x,u^0(x),u^0(x)-u^0(\cdot)) - F^h(x,u^0(x)-K,u^0(x)-u^0(\cdot))\\
&\geq CK.
\end{align*}
Thus for every $x\in\G$ we have
\[ F^h(x,u^-(x),u^-(x)-u^-(\cdot)) \leq \max\limits_{x\in\G}F^h(x,u^0(x),u^0(x)-u^0(\cdot)) - CK.\]
Taking sufficiently large $K>0$ ensures that this quantity is negative for every  $x\in\G$. In that case, $u^-$ is a subsolution.  The construction of a supersolution $u^+$ is similar.

Since
\[  u^- \leq u^0 \leq u^+,\]
we can appeal to the monotonicity of the DDM mapping (Corollary~\ref{cor:Gmono}) to conclude that
\[ G^n[u^-] \leq G^n[u^0] \leq G^n[u^+] \]
on $\G$ for every $n\in\N$.

By Theorem~\ref{thrm:subsupconv}, initializing with a sub- or supersolution yields convergence:
\[ \lim\limits_{n\to\infty} G^n[u^-] = u^* = \lim\limits_{n\to\infty} G^n[u^+].\]
We conclude that 
\[\lim\limits_{n\to\infty} G^n[u^0] = u^*,\]
as desired.
\end{proof}

\section{Numerical results}\label{sec:results}

This section is devoted to the validation of the proposed domain decomposition method for 
the Monge-Amp\`ere equation. 

\subsection{Discretization}
We begin by discretizing the \MA equation using the quadrature-based scheme proposed in~\cite{brusca2022convergent}, which is monotone and has a formal truncation error of $\bO(h^{4/3})$.  The discretization is based upon the following representation of the \MA operator as a Gaussian integral:
\bq\label{eq:MAInt}
{\det(D^2u) = \left(\frac{1}{\pi}\int_{0}^{\pi}\frac{d\theta}{u_{\theta\theta}(x)}\right)^{-2}.}
\eq

We first generate the computational domain.  Our starting point is a Cartesian mesh $\{(ih,jh) \mid i,j\in\Z\}$ that tiles $\R^2$ for some grid spacing $h>0$.  We also choose a stencil width {$w = \ceil{h^{-1/3}}$}.  Now we let $(r_j,\theta_j)$ be the polar coordinates of the following grid-aligned points lying within the desired stencil width: 
\bq\label{eq:l1angles} r_j(\cos\theta_j, \sin\theta_j) = h\left(w-j, w-\abs{w-j}\right), \quad j = 0, \ldots, 2w-1. \eq

From this tiling of $\R^2$ and angular discretization $0 = \theta_0 < \theta_1 < \ldots < \theta_{2w-1} < \pi$, we generate a set of discretization points $\G$ by (1) including all mesh points lying in the interior of the domain $\Omega$ and (2) supplementing with points in $\partial\Omega$ in order to preserve the existence of grid points perfectly aligned with the given set of angles.  That is, given any interior node $x\in\G\cap\Omega$  and $j = 0, \ldots, M$, we have
\[ x \pm r^\pm_j(x)(\cos\theta_j,\sin\theta_j) \in \mathcal{G} \]
for some $r^\pm_j(x)>0$ as in Figure~\ref{fig:stencil}.

Now for any interior node $x\in\G\cap\Omega$ and any direction $\nu_j = (\cos\theta_j,\sin\theta_j)$, we can approximate the second directional derivative of a function $u$ in the direction of $\nu_j$ by
\[ \Dt_{\nu_j\nu_j}u(x) = \frac{r^-_j(x)u(x-r^-(x)\nu_j) + r^+_j(x)u(x+r^+(x)\nu_j)-(r^+(x)+r^-(x))u(x)}{r^+(x)r^-(x)(r^+(x)+r^-(x))}. \]
Note that except in a narrow band near $\partial\Omega$, this reduces to a standard second-order centered difference discretization of the form~\eqref{eq:centered}.

We now regularize the integral in~\eqref{eq:MAInt} and discretize using a non-uniform Simpson's rule to produce a scheme of the form
\bq\label{eq:quadscheme}
\begin{split}
{F}^h(x,u(x),u(x)-u(\cdot)) =
&-\left(\frac{1}{\pi}\sum\limits_{i=0}^{2w-1} \frac{\mu_j}{\max\{\Dt_{\nu_j\nu_j}u(x), {{h^2}}\}} \right)^{-2}\\  &- \min\limits_{j=0,\ldots,2w-1}\left\{\Dt_{\nu_j\nu_j} u(x),{{h^2}}\right\}
+f(x)
\end{split}
\eq
where the quadrature weights are given by
\bq\label{eq:quadweights}
\mu_j = \begin{cases}
\dfrac{(d\theta_{j-1}+d\theta_j)^3}{6d\theta_{j-1}d\theta_j}, & j \text{ odd}\\
\dfrac{d\theta_j+d\theta_{j+1}}{6}\left(2-\dfrac{d\theta_{j+1}}{d\theta_j}\right) +\dfrac{d\theta_{j-2}+d\theta_{j-1}}{6}\left(2-\dfrac{d\theta_{j-2}}{d\theta_{j-1}}\right), & j \text{ even}.
\end{cases}
\eq
Above, $d\theta_j = \theta_{j+1}-\theta_j$ is the local angular resolution of the discretization and we define $d\theta_{2w-1} = \theta_0 + \pi - \theta_{2w-1}$.

\subsection{Implementation of DDM}

In our tests, we consider  the domain $\Omega=(-L,L)^2$.  The interior of the domain is discretized using an  $N\times N$ lattice of uniformly distributed nodes, with the addition of boundary points as described above.  
The mesh nodes are then distributed to $N_d$ subdomains in an $m$ by $n$ block format. 
Figure~\ref{fig:ddmtikz}  illustrates this decomposition of $n \times m$. 

\tikzmath{\dx = 0.2; \dy = 0.3;}
\begin{figure}
\centering
{\begin{tikzpicture}[scale=1.2]
\draw (-1,1) -- (-1,-1);
\draw (-1,-1) -- (1,-1);
\draw (1,-1) -- (1,1);
\draw (1,1) -- (-1,1);
\draw[dotted] (-1,0) -- (1,0);
\draw[dotted] (0,1) -- (0,-1);
\node at (-0.5,0.5) {$\Omega_1$};
\node at (0.5,0.5) {$\Omega_2$};
\node at (-0.5,-0.5) {$\Omega_3$};
\node at (0.5,-0.5) {$\Omega_4$};
\draw[dashed] (-1,-\dy) -- (\dx,-\dy);
\draw[dashed] (\dx,-\dy) -- (\dx,1);
\node[above] at (\dx/2,1) {$p_x$};
\node[left] at (-1,-\dy/2) {$p_y$};
\end{tikzpicture}}
\hspace{0.5in}
{\begin{tikzpicture}[scale=1.2]
\draw (-1,1) -- (-1,-1);
\draw (-1,-1) -- (1,-1);
\draw (1,-1) -- (1,1);
\draw (1,1) -- (-1,1);
\draw[dotted] (0,1) -- (0,-1);
\draw[dotted] (-1,0) -- (1,0);
\draw[dotted] (-1,0.5) -- (1,0.5);
\draw[dotted] (-1,-0.5) -- (1,-0.5);
\node at (0.5,0.25) {$\Omega_i$};
\draw[dashed] (1,0.6) -- (-0.1,0.6) -- (-0.1,-0.1) -- (1,-0.1);

\draw [decorate,decoration={brace,amplitude=3pt},xshift=-3pt,yshift=0pt]
        (-1,-1) -- (-1,1) node [black,midway,xshift=-20pt] { $n=4$};
\draw [decorate,decoration={brace,amplitude=3pt},xshift=0pt,yshift=3pt]
        (-1,1) -- (1,1) node [black,midway,yshift=9pt] { $m=2$};
\end{tikzpicture}}
\caption{Overlapping decompositions with $N_d = 2 \times 2$ and $N_d = 4\times 2$ subdomains. In general $N_d = n \times m$ means $m$ equal splits along the x-axis, and $n$ equal splits along the y-axis.}
\label{fig:ddmtikz}
\end{figure}
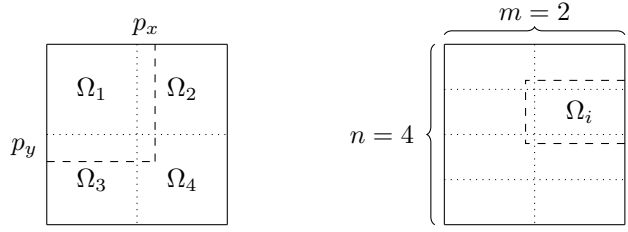

To define the overlap between subdomains, we introduce two integers  $\delta_x$ and $\delta_y$, which determine how many layers of nodes the subdomains should extend in the $x$ and $y$ directions. We will discuss the overlap amount in terms of percentages: $x$-overlap percentage $p_x=\frac{\delta_x}{N_x/n}$ and $y$-overlap percentage $p_y=\frac{\delta_y}{N_y/m}$. Figure~\ref{fig:ddmtikz} illustrates an example with the subdomain $\Omega_1$ enlarged with $p_x = 20\%$ and $p_y=30\%$. Note that 100\% overlap is the case where there is complete overlap between neighboring subdomains. By specifying the overlap as a percentage, it becomes easier to adjust and compare the overlap between different configurations. It allows for a flexible and intuitive way to control the level of interaction between subdomains and explore the trade-off between computational efficiency and accuracy in the solution. 

The subdomain problems are solved using Newton's method.  This consists of solving the non linear systems 
\[ F_i^h(x,u_i^{n+1};u^n) = 0 \]
on each subdomain $i = 1, \ldots, N_d$ in order to obtain the DDM updates $u_i^{n+1}$.  These systems are solved using a Newton-Krylov (NK) method, which involves an inner iteration of the form
\bq\label{eq:newton}
\nabla_u F_i^h(x,u_i^{n+1,k};u^n) y^k = -F_i^h(x,u_i^{n+1,k};u^n), \quad u_i^{n+1,k+1} = u_i^{n+1,k} + \lambda_k y^k,
\eq
where $\lambda_k$ is computed with the linesearch method~{\cite{Brune_2015}} and we utilize the exact analytical Jacobian. 
The Newton iteration in each subdomain is terminated when the residual satisfies {$\|F_i[u^n]\|_2<h$}.

This method requires repeated solutions of the linear system 
$$ \nabla_u F_i^h(x,u_i^{n+1,k};u^n) y^k = -F_i^h(x,u_i^{n+1,k};u^n)$$ 
for the update $y^k$, which is accomplished using a Krylov solver. 
We use deflated restarting GMRES~{\cite{erhel_restarted_1996}} for the Krylov method. 
The Krylov method is itself an iterative method, and though it will in theory converge in a finite number of steps, it is often better to truncate once some tolerance is met. We terminate the Krylov iteration when the $L^2$ norm of the relative residual is less than $10^{-5}$.

Theorem~\ref{thm:mainconvergence} guarantees that the DDM algorithm converges as the number of DDM iterations $n\ra \infty$.   Here we use the stopping criterion by $\|F^h(x,u^n(x),u^n(x)-u^n(\cdot))\|_2 < h$, where here the residual is computed over the entire domain.

The DDM iteration is initialized by first solving the problem on a coarser grid (with grid spacing $4h$), then interpolating the result onto the finer grid.  The numerical tests  are performed using  
the PETSc library~\cite{bueler2021petsc,petsc-user-ref}.

\subsection{Computational Tests}
We perform computational tests using two different examples.  

The first example involves a smooth $C^\infty$ solution of the \MA equation with data given by
\begin{equation}\tag{Ex. 1}\label{eq:ex1}
    u(\textbf{x}) = \text{exp}\bigg{(}\frac{\norm{\textbf{x}}_2^2}{2}\bigg{)},\quad f(\textbf{x}) = \bigg{(}1+\norm{\textbf{x}}_2^2\bigg{)}\text{exp}\big{(}\norm{\textbf{x}}_2^2\big{)}.
\end{equation}

The second example is a non-classical $C^1$ viscosity solution of the \MA equation.  Moreover, this example is not uniformly elliptic since the solution is convex but not strictly convex (and $f=0$ in part of the domain).  {{The solution and problem data are given by
}}
    \begin{equation}\tag{Ex. 2}\label{eq:ex5}
        u(\textbf{x}) = \text{max}\bigg{(} \norm{\textbf{x}}_2 -\frac{1}{5}  , 0 \bigg{)}^{5/2}
        ,\quad
        f(\textbf{x}) = \frac{3}{8}\text{max}\bigg{(} 5\norm{\textbf{x}}_2 -1  , 0 \bigg{)}^{2}\norm{\textbf{x}}_2^{-1}.
    \end{equation}
		
In both examples, the magnitude of the gradient grows as the distance from the origin increases.  See Figure~\ref{fig:solutions}.

\begin{figure}
    \centering
    \begin{subfigure}[]{0.45\textwidth}\includegraphics[width=\textwidth]{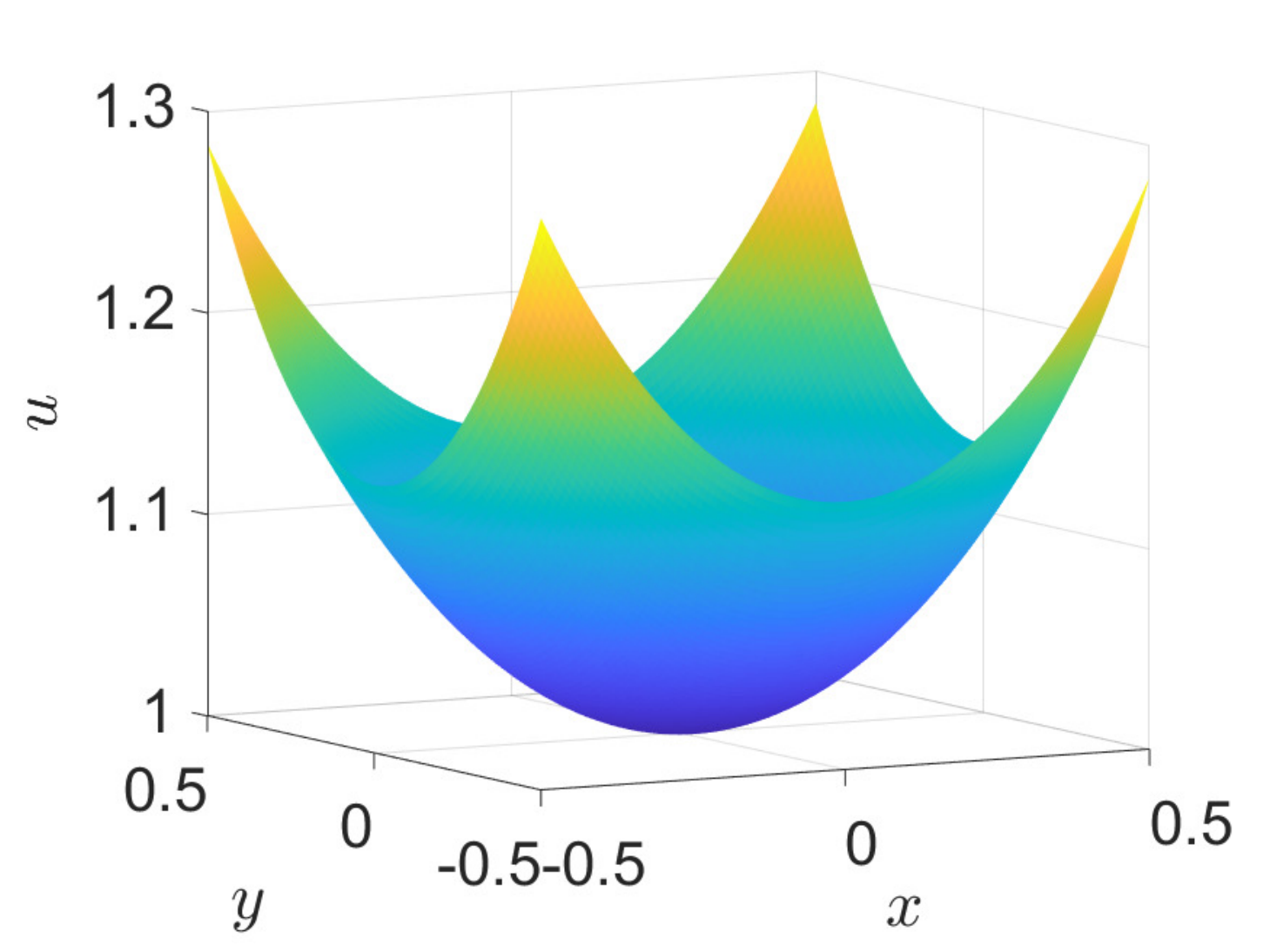}\caption{A smooth solution.}\label{fig:u1}\end{subfigure}
    \begin{subfigure}[]{0.45\textwidth}\includegraphics[width=\textwidth]{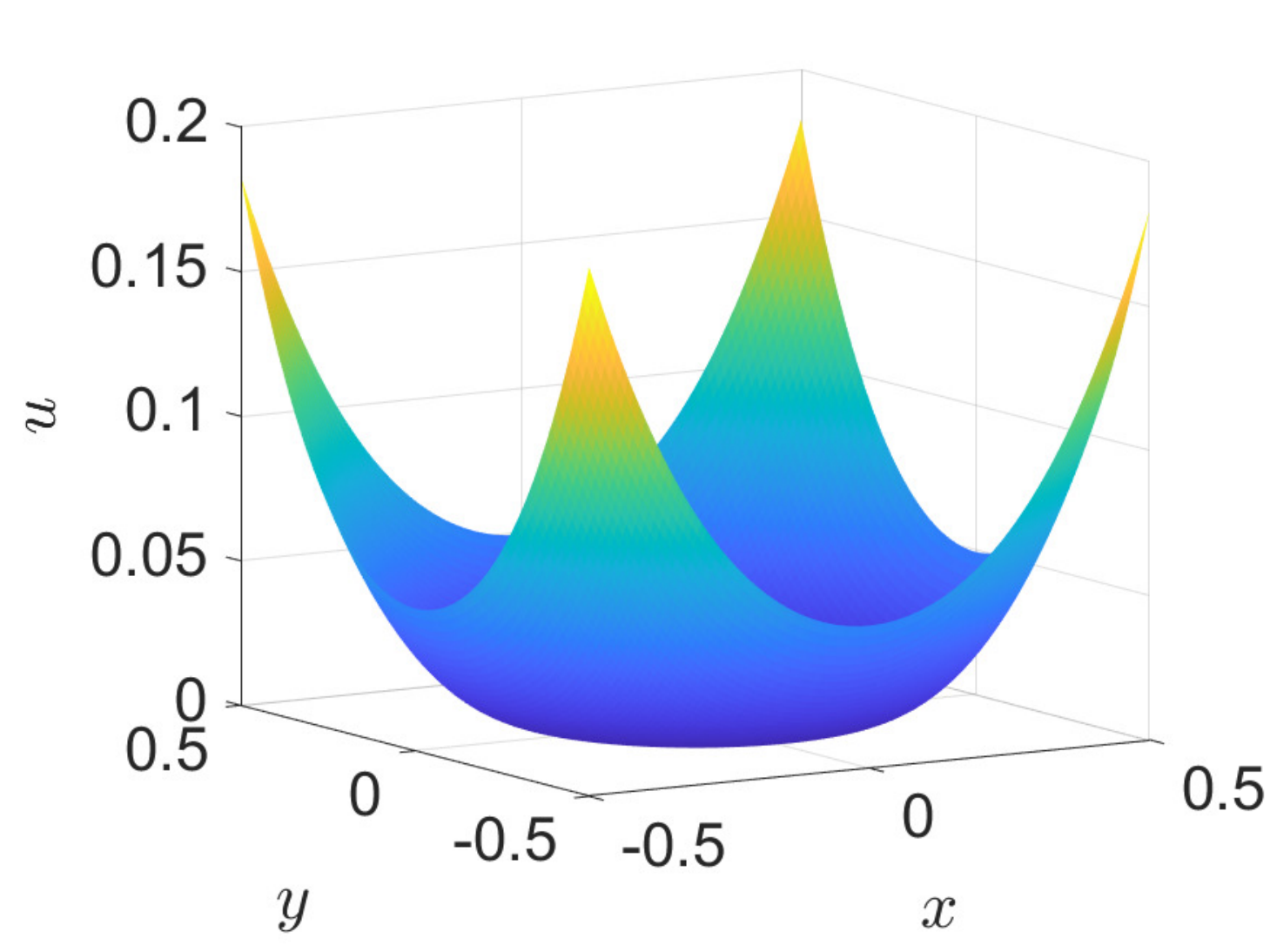}\caption{A $C^1$ solution.}\label{fig:u2}\end{subfigure}
    \caption{Exact solutions of \MA used in numerical tests.}
    \label{fig:solutions}
\end{figure}

In the first test, we provide benchmark results by solving the \MA equation using a global Newton-Krylov (NK) solver (that is, $N_d = 1$).  We provide the number of Newton iterations required 
 with respect  to the size of the original domain $L$ and the discretization parameter $h$.  Tables~\ref{tab:nk_nd1p}-\ref{tab:nks_nd4p} summarize the results.  We see that the convergence of Newton's method deteriorates with respect to the size $L$ of the domain and resolution $h$ of the grid regardless of the regularity of the solution.   We also provide the resulting error in the solutions, which is consistent with the expected consistency error.

\begin{table}[h]
        \centering
        \begin{subtable}{0.4\linewidth}
            \setlength{\tabcolsep}{4pt}       
            \renewcommand{\arraystretch}{1} 
                \begin{tabular}{@{}c|c|c@{}}
                $L$ & Iterations & $L^2$ Error  \\ \hline
                0.5 & 6 & 3.73E-04  \\
                1.0 & 7 & 4.76E-03  \\
                1.5 & 11 & 5.36E-02  \\
                2.0 & 32 & 5.28E-01  \\
                \end{tabular}
            \caption{$h = 0.05$}
            \label{st:nk_h0.05}
        \end{subtable}
        \begin{subtable}{0.4\linewidth}
            \centering
            \setlength{\tabcolsep}{4pt}       
            \renewcommand{\arraystretch}{1} 
                \begin{tabular}{@{}c|c|c@{}}
                $L$ & Iterations & $L^2$ Error  \\ \hline
                0.5 & 4 & 3.89E-05  \\
                1.0 & 7 & 5.49E-04  \\
                1.5 & 19 & 5.02E-03  \\
                2.0 & 53 & 5.57E-02 \\
                \end{tabular}
            \caption{$h = 0.01$}
            \label{st:nk_h0.01}
        \end{subtable} \\
        \caption{Results of Newton's Method for Example 1.}
        \label{tab:nk_nd1p}
    \end{table}

        \begin{table}[h]
	\vspace*{-6pt}
        \centering
        \begin{subtable}{0.4\linewidth}
            \setlength{\tabcolsep}{4pt}       
            \renewcommand{\arraystretch}{1} 
                \begin{tabular}{@{}c|c|c@{}}
                $L$ & Iterations & $L^2$ Error  \\ \hline
                0.5 & 9 & 1.61E-03  \\
                1.0 & 8 & 5.60E-03  \\
                1.5 & 12 & 1.27E-02 \\
                2.0 & 10 & 2.20E-02  \\
                \end{tabular}
            \caption{$h = 0.05$ }
            \label{st:h0.05}
        \end{subtable}
        \begin{subtable}{0.4\linewidth}
            \centering
            \setlength{\tabcolsep}{4pt}       
            \renewcommand{\arraystretch}{1} 
                \begin{tabular}{@{}c|c|c@{}}
                $L$ & Iterations & $L^2$ Error \\ \hline
                0.5 & 17 & 1.74E-04  \\
                1.0 & 17 & 5.87E-04  \\
                1.5 & 18 & 1.15E-03  \\
                2.0 & 21 & 1.98E-03 \\
                \end{tabular}
            \caption{$h = 0.01$}
            \label{st:h0.01}
        \end{subtable}
        \caption{Results of Newton's Method for Example 2.}
        \label{tab:nks_nd4p}
    \end{table}

 Let us now apply the DDM algorithm.
To test the DDM procedure, we vary  the overlap percentage $p_x, p_y$ between subdomains, which is  fixed according to the the length of subdomains.  In this work, we consider only uniform overlap ($p_x = p_y = p$).   We choose to test the iterative algorithm for $p_1= 10\%$, $p_2= 20\%$, $p_3= 30\%$, and $p_4= 40\%$. The tests are performed for domain sizes $L=0.5,1.0,1.5,2.0$ and for the discretization parameters $h=0.05,0.01$.  
 
 The results are displayed in Tables~\ref{tab:nasm}-\ref{tab:nasm2}.  In all cases, we verify that the observed solution errors obtained from the DDM solutions are identical to the errors obtained with NK alone, which depend only on the parameters $h$ and $L$, and are independent of the particular details of the DDM implementation.  In particular, this verifies that the method has successfully converged to the desired solution.

One key observation is that the number of iterations decreases as the overlap $p$ is increased.  This phenomenon holds even when $N_d$ is large.  We also find that the iteration count depends on the structure of the subdivision of the domain, with lower iteration counts when the horizontal and vertical dimensions are subdivided in the same way ($m=n$). Perhaps more interesting, we find that the iteration counts remain fairly stable as the discretization parameter $h$ decreases and the domain size $L$ increases.  This is in contrast to the iteration counts of the NK solver on the global domain, which scaled poorly with $h$ and $L$.  Moreover, it is critical to note that while each DDM iteration involves inner Newton-Krylov solves, these occur on small subdomains.  As observed in the benchmark results (Tables~\ref{tab:nk_nd1p}-\ref{tab:nks_nd4p}), the NK solve is far more efficient on smaller domains.  These results hold even for the non-smooth example.  


\begin{table}[h]
\centering
\begin{subtable}{\linewidth}
\centering
\setlength{\tabcolsep}{3pt}       
\renewcommand{\arraystretch}{1.}
\begin{tabular}{c|cccc|cccc|cccc|cccc|cccc}
&\multicolumn{4}{c}{$N_d = 2\times 1$}&\multicolumn{4}{c}{$N_d = 2\times 2$}&\multicolumn{4}{c}{$N_d = 3 \times 2$}&\multicolumn{4}{c}{$N_d = 4 \times 2$}&\multicolumn{4}{c}{$N_d = 3 \times 3$}
\\
$L$ & $p_1$ & $p_2$ & $p_3$ & $p_4$ & $p_1$ & $p_2$ & $p_3$ & $p_4$ & $p_1$ & $p_2$ & $p_3$ & $p_4$ & $p_1$ & $p_2$ & $p_3$ & $p_4$ & $p_1$ & $p_2$ & $p_3$ & $p_4$
\\ \hline
0.5 & 7 & 5 & 4 & 3 & 10 & 7 & 6 & 4 & 13 & 9 & 8 & 6 & 15 & 14 & 9 & 9 & 16 & 10 & 10 & 8 
\\
1.0 & 11 & 7 & 5 & 5 & 16 & 10 & 8 & 6 & 21 & 15 & 11 & 9 & 30 & 21 & 15 & 13 & 25 & 19 & 15 & 11
\\
1.5 & 14 & 9 & 7 & 6 & 21 & 13 & 9 & 7 &33 & 20 & 14 & 11 & 38 & 26 & 19 & 15 & 43 & 27 & 18 & 15
\\
2.0 & 18 & 11 & 8 & 7 & 26 & 15 & 11 & 9 & 38 & 22 & 17 & 13 & 60 & 33 & 23 & 19 & 49 & 28 & 22 & 17
\end{tabular}
\caption{$h = 0.05$}
\label{st:nasm_h0.05}
\end{subtable}

\vspace*{10pt}
\begin{subtable}{\linewidth}
\centering
\setlength{\tabcolsep}{3pt}       
\renewcommand{\arraystretch}{1.}
\begin{tabular}{c|cccc|cccc|cccc|cccc|cccc}
&\multicolumn{4}{c}{$N_d = 2\times 1$}&\multicolumn{4}{c}{$N_d = 2\times 2$}&\multicolumn{4}{c}{$N_d = 3 \times 2$}&\multicolumn{4}{c}{$N_d = 4 \times 2$}&\multicolumn{4}{c}{$N_d = 3 \times 3$}
\\
$L$ & $p_1$ & $p_2$ & $p_3$ & $p_4$ & $p_1$ & $p_2$ & $p_3$ & $p_4$ & $p_1$ & $p_2$ & $p_3$ & $p_4$ & $p_1$ & $p_2$ & $p_3$ & $p_4$ & $p_1$ & $p_2$ & $p_3$ & $p_4$
\\ \hline
0.5 & 16 & 9 & 6 & 5 & 24 & 13 & 9 & 7 & 29 & 18 & 13 & 10 & 41 & 26 & 18 & 15 & 35 & 23 & 17 & 13
\\
1.0 & 19 & 10 & 7 & 6 & 30 & 16 & 11 & 8 & 40 & 22 & 16 & 13 & 58 & 33 & 23 & 18 & 51 & 28 & 21 & 17
\\
1.5 & 22 & 12 & 8 & 7 & 34 & 18 & 12 & 9 & 49 & 26 & 19 & 15 & 68 & 38 & 27 & 22 & 64 & 35 & 25 & 19
\\
2.0 & 30 & 14 & 12 & 10 & 38 & 20 & 14 & 12 & 55 & 31 & 22 & 17 & 81 & 45 & 32 & 25 & 73 & 41 & 29 & 22
\end{tabular}
\caption{$h = 0.01$}
\label{st:nasm_h0.01}
\end{subtable}
\caption{Number of DDM iterations for Example 1 as a function of the number of subdomains $N_d$, overlap $p$, domain size $L$, and grid spacing $h$.}
\label{tab:nasm}
\end{table}

\begin{table}[h]
\centering
\begin{subtable}{\linewidth}
\centering
\setlength{\tabcolsep}{3pt}       
\renewcommand{\arraystretch}{1.}
\begin{tabular}{c|cccc|cccc|cccc|cccc|cccc}
&\multicolumn{4}{c}{$N_d = 2\times 1$}&\multicolumn{4}{c}{$N_d = 2\times 2$}&\multicolumn{4}{c}{$N_d = 3 \times 2$}&\multicolumn{4}{c}{$N_d = 4 \times 2$}&\multicolumn{4}{c}{$N_d = 3 \times 3$}
\\
$L$ & $p_1$ & $p_2$ & $p_3$ & $p_4$ & $p_1$ & $p_2$ & $p_3$ & $p_4$ & $p_1$ & $p_2$ & $p_3$ & $p_4$ & $p_1$ & $p_2$ & $p_3$ & $p_4$ & $p_1$ & $p_2$ & $p_3$ & $p_4$
\\ \hline
0.5 & 6 & 4 & 4 & 3 & 8 & 6 & 5 & 4 & 10 & 7 & 7 & 5 & 12 & 12 & 8 & 8 & 13 & 8 & 8 & 6
\\
1.0 & 11 & 7 & 5 & 4 & 15 & 10 & 8 & 6 & 20 & 14 & 11 & 8 & 29 & 21 & 15 & 13 & 24 & 18 & 14 & 11
\\
1.5 & 14 & 8 & 6 & 5 & 20 & 12 & 9 & 7 & 31 & 19 & 13 & 11 & 36 & 25 & 18 & 15 & 42 & 26 & 17 & 14
\\
2.0 & 16 & 9 & 7 & 5 & 23 & 14 & 10 & 8 & 34 & 19 & 15 & 12 & 54 & 29 & 21 & 17 & 43 & 25 & 20 & 15
\end{tabular}
            \caption{$h = 0.05$}
            \label{st:nasm2_h0.05}
        \end{subtable}

\vspace*{10pt}
\begin{subtable}{\linewidth}
\centering
\setlength{\tabcolsep}{3pt}       
\renewcommand{\arraystretch}{1.}
\begin{tabular}{c|cccc|cccc|cccc|cccc|cccc}
&\multicolumn{4}{c}{$N_d = 2\times 1$}&\multicolumn{4}{c}{$N_d = 2\times 2$}&\multicolumn{4}{c}{$N_d = 3 \times 2$}&\multicolumn{4}{c}{$N_d = 4 \times 2$}&\multicolumn{4}{c}{$N_d = 3 \times 3$}
\\
$L$ & $p_1$ & $p_2$ & $p_3$ & $p_4$ & $p_1$ & $p_2$ & $p_3$ & $p_4$ & $p_1$ & $p_2$ & $p_3$ & $p_4$ & $p_1$ & $p_2$ & $p_3$ & $p_4$ & $p_1$ & $p_2$ & $p_3$ & $p_4$
\\ \hline
0.5 & 13 & 8 & 6 & 5 & 18 & 11 & 8 & 7 & 23 & 15 & 11 & 9 & 33 & 22 & 15 & 13 & 29 & 19 & 14 & 11
\\
1.0 & 18 & 10 & 7 & 6 & 27 & 15 & 10 & 8 & 38 & 21 & 15 & 12 & 54 & 31 & 22 & 17 & 49 & 27 & 20 & 16
\\
1.5 & 20 & 11 & 8 & 6 & 31 & 17 & 11 & 9 & 46 & 24 & 17 & 14 & 63 & 35 & 25 & 20 & 60 & 32 & 24 & 18
\\
2.0 & 20 & 11 & 8 & 6 & 32 & 17 & 12 & 9 & 48 & 26 & 19 & 15 & 69 & 38 & 28 & 22 & 63 & 36 & 26 & 20 
\end{tabular}
\caption{$h = 0.01$}
\label{st:nasm2_h0.01}
\end{subtable}
\caption{Number of DDM iterations for Example 2 as a function of the number of subdomains $N_d$, overlap $p$, domain size $L$, and grid spacing $h$.}
\label{tab:nasm2}
\end{table}

  
The last test is dedicated to a large domain size ($L=2$) with an increasing number of subdomains $N_d=n\times m$ arranged symmetrically with $n=m$.  We perform this test using $h=0.01$. See Figure~\ref{fig:resultsL2}.  While the total number of subdomains $N_d$ grows quadratically in this figure, the number of DDM iterations increases only linearly.  This suggests that the cost of DDM will scale very well with an increasing number of subdomains and parallelization.  A larger overlap percentage in the DDM results in fewer iterations, even for very large values of $N_d$. This result aligns with the expectation that increased overlap facilitates better communication, resulting in faster convergence.

    \begin{figure}[h]
        \centering
        \includegraphics[width=0.5\linewidth]{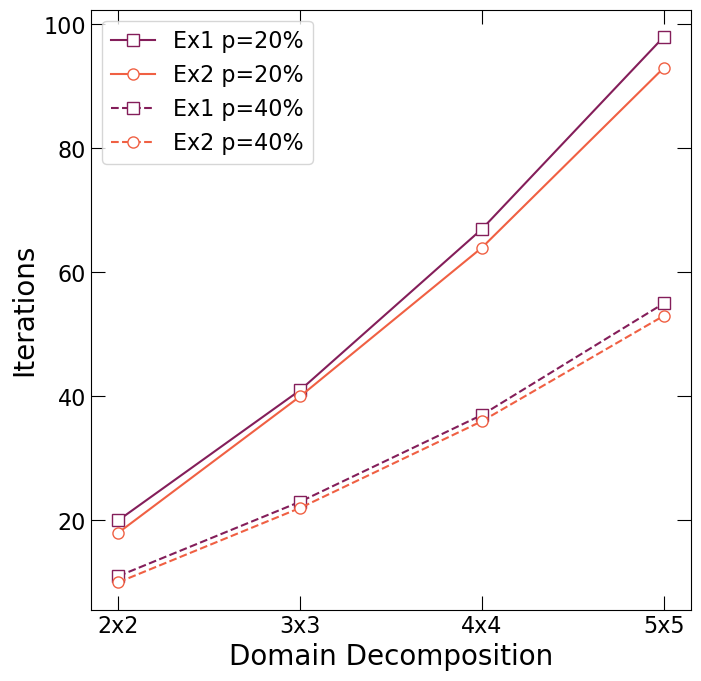}
        \caption{Number of DDM iterations as a function of the number of subdomains $m=n$ along each dimension.}
        \label{fig:resultsL2}
    \end{figure}

\section{Conclusion}\label{sec:conclusion}
{ {
In this paper, we have introduced a new domain decomposition method  for solving  the \MA equation.  We  showed that this method can be appropriately 
coupled with wide stencil approximations needed to ensure convergence to the weak solution of the equation. Using a discrete comparison principle argument, we have proved the convergence of the resulting iterative method to the solution of the underlying discrete scheme.  This proof establishes global convergence, given an arbitrary initial guess, and applies to any consistent and monotone discretization of the \MA equation.

We have validated our algorithm on examples of varying regularity.  These experiments confirm the proof of convergence presented in this paper. Moreover, the computational experiments demonstrate iteration counts that are fairly stable with respect to variations in problem size and discretization parameters, which is often not the case for standard Newton solvers applied to such strongly nonlinear problems.  Each iteration require the solution of several small sub-problems, which can be accomplished in parallel and with significantly less cost than a single global Newton update.  These observations continue to hold even for examples with less regularity and a loss of uniform ellipticity. The obtained iterative solver can further be improved by using adequate preconditioners and subdivision of the original computational domain, optimized utilization of parallel resources, 
and lower-tolerance solution to the subproblems.
The DDM solver appears very promising for solving this kind of equations.  We expect that even more dramatic improvements will be evident as we proceed to higher-dimensions, more singular problems, more highly resolved grids, and better optimized solvers. 

}}

\bibliographystyle{plain}
\bibliography{MABib}

\begin{thebibliography}{10}

\bibitem{Angenent_MedicalImaging}
S.~Angenent, E.~Pichon, and A.~Tannenbaum.
\newblock Mathematical methods in medical image processing.
\newblock {\em Bull. Amer. Math. Soc.}, 43(3):365--396, 2006.

\bibitem{RomijnOptics2021}
M.~J.~H. Anthonissen, L.~B. Romijn, J.~H.~M. ten Thije~Boonkkamp, and W.~L.
  IJzerman.
\newblock Unified mathematical framework for a class of fundamental freeform
  optical systems.
\newblock {\em Optics Express}, 29(20):31650--31664, 2021.

\bibitem{petsc-user-ref}
S.~Balay, S.~Abhyankar, M.~F. Adams, S.~Benson, J.~Brown, P.~Brune,
  K.~Buschelman, E.~Constantinescu, L.~Dalcin, A.~Dener, V.~Eijkhout, W.~D.
  Gropp, V.~Hapla, T.~Isaac, P.~Jolivet, D.~Karpeev, D.~Kaushik, M.~G. Knepley,
  F.~Kong, S.~Kruger, D.~A. May, L.~C. McInnes, R.~T. Mills, L.~Mitchell,
  T.~Munson, J.~E. Roman, K.~Rupp, P.~Sanan, J.~Sarich, B.~F. Smith,
  S.~Zampini, H.~Zhang, H~Zhang, and J.~Zhang.
\newblock {PETSc/TAO} users manual.
\newblock Technical Report ANL-21/39 - Revision 3.17, Argonne National
  Laboratory, 2022.

\bibitem{BSnum}
G.~Barles and P.~E. Souganidis.
\newblock Convergence of approximation schemes for fully nonlinear second order
  equations.
\newblock {\em Asymptotic Anal.}, 4(3):271--283, 1991.

\bibitem{benamou2016monotone}
J.-D. Benamou, F.~Collino, and J.-M. Mirebeau.
\newblock Monotone and consistent discretization of the {M}onge-{A}mpere
  operator.
\newblock {\em Math. Comput.}, 85(302):2743--2775, 2016.

\bibitem{benamou2014monotone}
J.-D. Benamou, F.~Collino, and J.-M. Mirebeau.
\newblock Monotone and consistent discretization of the {M}onge-{A}mpere
  operator.
\newblock {\em Mathematics of computation}, 85(302):2743--2775, 2016.

\bibitem{bendali_non-overlapping_2006-1}
A.~Bendali and Y.~Boubendir.
\newblock Non-overlapping {Domain} {Decomposition} {Method} for a {Nodal}
  {Finite} {Element} {Method}.
\newblock {\em Numerische Mathematik}, 103(4):515--537, June 2006.

\bibitem{Bonnet_OTBC}
G.~Bonnet and J.-M. Mirebeau.
\newblock Monotone discretization of the {M}onge-{A}mp{\`e}re equation of
  optimal transport.
\newblock {\em https://hal.archives-ouvertes.fr/hal-03255797/}, 2021.

\bibitem{boubendir_quasi-optimal_2012}
Y.~Boubendir, C.~Geuzaine, and X.~Antoine.
\newblock A quasi-optimal non-overlapping domain decomposition algorithm for
  the {Helmholtz} equation.
\newblock {\em Journal of Computational Physics}, 231(2):262--280, January
  2012.

\bibitem{BrennerNeilanMA2D}
S.~C. Brenner, T.~Gudi, M.~Neilan, and L.-Y. Sung.
\newblock ${C}^0$ penalty methods for the fully nonlinear {M}onge-{A}mp\`ere
  equation.
\newblock {\em Math. Comp.}, 80(276):1979--1995, 2011.

\bibitem{Brune_2015}
P.~R. Brune, M.~G. Knepley, B.~F. Smith, and X.~Tu.
\newblock Composing scalable nonlinear algebraic solvers.
\newblock {\em {SIAM} Review}, 57(4):535--565, jan 2015.

\bibitem{brusca2022convergent}
J.~Brusca and B.~F. Hamfeldt.
\newblock A convergent quadrature based method for the monge-amp$\backslash$ere
  equation.
\newblock {\em SIAM Journal on Scientific Computing}, 45(3):A1097--A1124, 2023.

\bibitem{bueler2021petsc}
E.~Bueler.
\newblock {\em PETSc for Partial Differential Equations: Numerical Solutions in
  C and Python}.
\newblock Software, Environments, and Tools Series. Society for Industrial and
  Applied Mathematics, 2021.

\bibitem{CaiDryja}
X.-C. Cai and M.~Dryja.
\newblock Domain decomposition methods for monotone nonlinear elliptic
  problems.
\newblock {\em Contemporary mathematics}, 180:21--21, 1994.

\bibitem{CaiKeyes}
X.-C. Cai and D.~E. Keyes.
\newblock Nonlinearly preconditioned inexact {N}ewton algorithms.
\newblock {\em SIAM Journal on Scientific Computing}, 24(1):183--200, 2002.

\bibitem{CaiLi}
X.-C. Cai and X.~Li.
\newblock Inexact {N}ewton methods with restricted additive {S}chwarz based
  nonlinear elimination for problems with high local nonlinearity.
\newblock {\em Siam journal on scientific computing}, 33(2):746--762, 2011.

\bibitem{ChaoiquiGander}
F.~Chaouqui, M.~J. Gander, P.~M. Kumbhar, and T.~Vanzan.
\newblock On the nonlinear {D}irichlet-{N}eumann method and preconditioner for
  {N}ewton’s method.
\newblock In {\em Domain Decomposition Methods in Science and Engineering
  XXVI}, pages 381--389. Springer, 2023.

\bibitem{WanMA}
Y.~Chen, J.~Wan, and J.~Lin.
\newblock Monotone mixed finite difference scheme for monge--amp{\`e}re
  equation.
\newblock {\em Journal of Scientific Computing}, 76(3):1839--1867, 2018.

\bibitem{chiappori2017multi}
P.~Chiappori, R.~J. McCann, and B.~Pass.
\newblock Multi-to one-dimensional optimal transport.
\newblock {\em Communications on Pure and Applied Mathematics},
  70(12):2405--2444, 2017.

\bibitem{CIL}
M.~G. Crandall, H.~Ishii, and P.-L. Lions.
\newblock User's guide to viscosity solutions of second order partial
  differential equations.
\newblock {\em Bull. Amer. Math. Soc. (N.S.)}, 27(1):1--67, 1992.

\bibitem{cullen2006mathematical}
M.~J.~P. Cullen.
\newblock {\em A mathematical theory of large-scale atmosphere/ocean flow}.
\newblock World Scientific, 2006.

\bibitem{DeanGlowinski}
E.~J. Dean and R.~Glowinski.
\newblock Numerical methods for fully nonlinear elliptic equations of the
  {M}onge-{A}mp{\`e}re type.
\newblock {\em Computer methods in applied mechanics and engineering},
  195(13-16):1344--1386, 2006.

\bibitem{DoleanGander}
V.~Dolean, M.~J. Gander, W.~Kheriji, F.~Kwok, and R.~Masson.
\newblock Nonlinear preconditioning: How to use a nonlinear {S}chwarz method to
  precondition {N}ewton's method.
\newblock {\em SIAM Journal on Scientific Computing}, 38(6):A3357--A3380, 2016.

\bibitem{DryjaHackbush}
M.~Dryja and W.~Hackbusch.
\newblock On the nonlinear domain decomposition method.
\newblock {\em BIT Numerical Mathematics}, 37:296--311, 1997.

\bibitem{EngquistFroese_Wass}
B.~Engquist and B.~D Froese.
\newblock Application of the {W}asserstein metric to seismic signals.
\newblock {\em Communications in Mathematical Sciences}, 12(5):979--988, 2014.

\bibitem{erhel_restarted_1996}
J.~Erhel, K.~Burrage, and B.~Pohl.
\newblock Restarted {GMRES} preconditioned by deflation.
\newblock {\em Journal of Computational and Applied Mathematics},
  69(2):303--318, 1996.

\bibitem{EwingDDM}
R.~E. Ewing.
\newblock A survey of domain decomposition techniques and their implementation.
\newblock {\em Advances in water resources}, 13(3):117--125, 1990.

\bibitem{feng2017convergent}
X.~Feng and M.~Jensen.
\newblock Convergent semi-{L}agrangian methods for the {M}onge-{A}mp\`ere
  equation on unstructured grids.
\newblock {\em SIAM Journal on Numerical Analysis}, 55(2):691--712, 2017.

\bibitem{feng2021narrow}
X.~Feng and T.~Lewis.
\newblock A narrow-stencil finite difference method for approximating viscosity
  solutions of {H}amilton--{J}acobi--{B}ellman equations.
\newblock {\em SIAM Journal on Numerical Analysis}, 59(2):886--924, 2021.

\bibitem{FengNeilan}
X.~Feng and M.~Neilan.
\newblock Vanishing moment method and moment solutions for fully nonlinear
  second order partial differential equations.
\newblock {\em J. Sci. Comput.}, 38(1):74--98, 2009.

\bibitem{FOFiltered}
B.~Froese and A.~Oberman.
\newblock Convergent filtered schemes for the monge--ampere partial
  differential equation.
\newblock {\em SIAM Journal on Numerical Analysis}, 51(1):423--444, 2013.

\bibitem{FroeseMeshfreeEigs}
B.~D. Froese.
\newblock Meshfree finite difference approximations for functions of the
  eigenvalues of the {Hessian}.
\newblock {\em Numer. Math.}, 138(1):75--99, 2018.

\bibitem{FO_MATheory}
B.~D. Froese and A.~M. Oberman.
\newblock Convergent finite difference solvers for viscosity solutions of the
  elliptic {M}onge-{A}mp\`ere equation in dimensions two and higher.
\newblock {\em SIAM J. Numer. Anal.}, 49(4):1692--1714, 2011.

\bibitem{FO_MANum}
B.~D. Froese and A.~M. Oberman.
\newblock Fast finite difference solvers for singular solutions of the elliptic
  {M}onge-{A}mp{\`e}re equation.
\newblock {\em Journal of Computational Physics}, 230(3):818--834, 2011.

\bibitem{galichon2016optimal}
A.~Galichon.
\newblock {\em Optimal Transport Methods in Economics}.
\newblock Princeton University Press, 2016.

\bibitem{Gander_Schwarz}
M.~J. Gander.
\newblock Schwarz methods over the course of time.
\newblock {\em Electron. Trans. Numer. Anal}, 31(5):228--255, 2008.

\bibitem{Gutierrez}
C.~E. Guti{\'e}rrez.
\newblock {\em The {M}onge--{A}mp{\`e}re Equation}, volume~44 of {\em Progr.
  Nonlinear Differential Equations Appl.}
\newblock Springer Science \& Business Media, 2001.

\bibitem{GutierrezAnisotropic2019}
C.~E. Guti{\'e}rrez, Q.~Huang, and H.~Mawi.
\newblock Refractors in anisotropic media associated with norms.
\newblock {\em Nonlinear Analysis}, 188:125--141, 2019.

\bibitem{Hamfeldt_gaussian}
B.~Hamfeldt.
\newblock Convergent approximation of non-continuous surfaces of prescribed
  {G}aussian curvature.
\newblock {\em Communications on Pure and Applied Analysis}, 17(2):671--707,
  2018.

\bibitem{HL_ThreeDimensions}
B.~F. Hamfeldt and J.~Lesniewski.
\newblock Convergent finite difference methods for fully nonlinear elliptic
  equations in three dimensions.
\newblock {\em J. Sci. Comput.}, 90(35), March 2022.

\bibitem{jensen20187}
M.~Jensen and I.~Smears.
\newblock On the notion of boundary conditions in comparison principles for
  viscosity solutions.
\newblock In {\em Hamilton-Jacobi-Bellman Equations}, pages 143--154. De
  Gruyter, 2018.

\bibitem{Kocan}
M.~Kocan.
\newblock Approximation of viscosity solutions of elliptic partial differential
  equations on minimal grids.
\newblock {\em Numer. Math.}, 72(1):73--92, 1995.

\bibitem{lin2020gans}
J.~Y. Lin, S.~Guo, L.~Xie, and G.~Xu.
\newblock Multi-projection of unequal dimension optimal transport theory for
  {G}enerative {A}dversary {N}etworks.
\newblock {\em Neural Networks}, 128:107--125, 2020.

\bibitem{Lions_ddm}
P.-L. Lions.
\newblock On the {S}chwarz alternating method. i.
\newblock In {\em First international symposium on domain decomposition methods
  for partial differential equations}, volume~1, page~42. Paris, France, 1988.

\bibitem{Lui}
S.-H. Lui.
\newblock On {S}chwarz alternating methods for nonlinear elliptic {PDE}s.
\newblock {\em SIAM Journal on Scientific Computing}, 21(4):1506--1523, 1999.

\bibitem{mirebeau2015MA}
J.-M. Mirebeau.
\newblock Discretization of the 3d {M}onge-{A}mpere operator, between wide
  stencils and power diagrams.
\newblock {\em ESAIM: Mathematical Modelling and Numerical Analysis},
  49(5):1511--1523, 2015.

\bibitem{mirebeau2016minimal}
J.-M. Mirebeau.
\newblock Minimal stencils for discretizations of anisotropic {PDE}s preserving
  causality or the maximum principle.
\newblock {\em SIAM Journal on Numerical Analysis}, 54(3):1582--1611, 2016.

\bibitem{MotzkinWasow}
T.~S. Motzkin and W.~Wasow.
\newblock On the approximation of linear elliptic differential equations by
  difference equations with positive coefficients.
\newblock {\em Journal of Mathematics and Physics}, 31(1-4):253--259, 1952.

\bibitem{Nochetto_MAConverge}
R.~Nochetto, D.~Ntogkas, and W.~Zhang.
\newblock Two-scale method for the {M}onge-{A}mp{\`e}re equation: Convergence
  to the viscosity solution.
\newblock {\em Mathematics of Computation}, 2018.

\bibitem{ObermanEP}
A.~M. Oberman.
\newblock Convergent difference schemes for degenerate elliptic and parabolic
  equations: {H}amilton--{J}acobi equations and free boundary problems.
\newblock {\em SIAM J. Numer. Anal.}, 44(2):879--895, 2006.

\bibitem{ObermanWS}
A.~M. Oberman.
\newblock Wide stencil finite difference schemes for the elliptic
  {M}onge-{A}mp\`ere equation and functions of the eigenvalues of the
  {H}essian.
\newblock {\em Discrete Contin. Dyn. Syst. Ser. B}, 10(1):221--238, 2008.

\bibitem{ObermanEigenvalues}
A.~M. Oberman.
\newblock Wide stencil finite difference schemes for the elliptic
  {M}onge-{A}mp\`ere equation and functions of the eigenvalues of the
  {H}essian.
\newblock {\em Discrete Contin. Dyn. Syst. Ser. B}, 10(1):221--238, 2008.

\bibitem{OsherSethian}
S.~Osher and J.~A. Sethian.
\newblock Fronts propagating with curvature-dependent speed: algorithms based
  on {H}amilton-{J}acobi formulations.
\newblock {\em J. Comput. Phys.}, 79(1):12--49, 1988.

\bibitem{dutchgroup}
C.~R. Prins, R.~Beltman, J.~H.~M. ten Thije~Boonkkamp, W.~L. IJzerman, and
  T.~W. Tukker.
\newblock A least-squares method for optimal transport using the
  {M}onge-{A}mp{\`e}re equation.
\newblock {\em SIAM Journal on Scientific Computing}, 37(6):B937--B961, 2015.

\bibitem{quarteroni_domain_1999}
A.~Quarteroni and A.~Valli.
\newblock {\em Domain decomposition methods for partial differential
  equations}.
\newblock Numerical mathematics and scientific computation. Clarendon Press,
  Oxford ; New York, 1999.

\bibitem{SmithDDM}
B.~Smith, P.~Bj{\o}rstad, and W.~Gropp.
\newblock Domain decomposition: Parallel multilevel methods for elliptic
  partial differential equations, 1996.

\bibitem{SpiteriMiellou}
P.~Spiteri, J.-C. Miellou, and D.~El~Baz.
\newblock Parallel asynchronous {S}chwarz and multisplitting methods for a
  nonlinear diffusion problem.
\newblock {\em Numerical Algorithms}, 33:461--474, 2003.

\bibitem{TaiEspedal}
X.-C. Tai and M.~Espedal.
\newblock Rate of convergence of some space decomposition methods for linear
  and nonlinear problems.
\newblock {\em SIAM journal on numerical analysis}, 35(4):1558--1570, 1998.

\bibitem{Villani}
C.~Villani.
\newblock {\em Topics in optimal transportation}, volume~58 of {\em Graduate
  Studies in Mathematics}.
\newblock AMS, Providence, RI, 2003.

\end{thebibliography}

\end{document}